\def\tr{\mathop{\rm tr}\nolimits}
\def\vec{\mathop{\rm vec}\nolimits}
\newcommand {\boldgreektext}[1] {\boldmath
             \(#1\)\unboldmath}
\newcommand {\boldgreek}[1]
             {\mbox{\boldgreektext{#1}}
            }
\renewenvironment{abstract}
                 {\vspace{6pt}
                  \begin{center}
                  \begin{minipage}{5in}
                  \centerline{\textbf{Abstract}}
                  \noindent\ignorespaces
                 }
                 {\end{minipage}\end{center}}
\newtheorem{theorem}{\textbf{Theorem}}[section]
\newtheorem{corollary}{\textbf{Corollary}}[section]
\theoremstyle{definition}
\title{\Large \textbf{Multimatrix variate distributions}}
\author{
  \textbf{Jos\'e A. D\'{\i}az-Garc\'{\i}a} \thanks{Corresponding author\newline
   {\bf Key words.}  Bimatrix variate, matrix variate, matricvariate, random matrices, matrix variate elliptical distributions.\newline
    2000 Mathematical Subject Classification. 60E05; 62E15; 15A23; 15B52}\\
  {\normalsize Universidad Aut\'onoma de Chihuahua} \\
  {\normalsize Facultad de Zootecnia y Ecolog\'{\i}a} \\
  {\normalsize Perif\'erico Francisco R. Almada Km 1, Zootecnia} \\
  {\normalsize 33820 Chihuahua, Chihuahua, M\'exico}\\
  {\normalsize E-mail: jadiaz@uach.mx}\\
  \textbf{Francisco J. Caro-Lopera}\\
  {\normalsize University of Medellin} \\
  {\normalsize Faculty of Basic Sciences} \\
  {\normalsize Carrera 87 No.30-65} \\
  {\normalsize Medell\'{\i}n, Colombia} \\
  {\normalsize E-mail: fjcaro@udemedellin.edu.co} \\[2ex]
}
\date{}
\begin{document}
\maketitle

\begin{abstract}
A new family of distributions indexed by the class of matrix variate contoured elliptically distribution is proposed as an extension of some bimatrix variate distributions. The termed \emph{multimatrix variate distributions} open new perspectives for the classical distribution theory, usually based on probabilistic independent models and preferred untested fitting laws. Most of the multimatrix models here derived are invariant under the spherical family, a fact that solves the testing and prior knowledge of the underlying distributions and elucidates the statistical methodology in contrasts with some weakness of current studies as copulas.  The paper also includes a number of diverse special cases, properties and  generalisations. The new joint distributions allows several unthinkable combinations for copulas, such as scalars, vectors and matrices, all of them adjustable to the required models of the experts. The proposed joint distributions are also easily computable, then several applications are plausible. In particular, an exhaustive example in molecular docking on SARS-CoV-2 presents the results on matrix dependent samples.
\end{abstract}

\section{Introduction}\label{sec:1}

The modern world and its paradigms for understanding natural phenomena increasingly demand holistic solutions that involve the consensus of multiple sciences. The so-called new way of doing science, the scientific computing, is based on the postulate that the phenomena of post modernity intertwine multiple dependencies between knowledge and variables. The paradox of the butterfly's flapping effect, for example, confirms that the independence of events in a modelable scientific phenomenon is increasingly difficult to accept in the scientific community. From the epistemological point of view, a construct of independent variables immensely distances reality from the model that seeks to explain it.
Unless an expert manages to completely isolate its universe of explanatory variables of a phenomenon and has proven certainty of its independence, it is generally difficult from the usual theory to arrive at an explanation of the interrelationship in question. Chaos theory and the divergence of non-deterministic phenomena bring modern statistics closer to the elucidation of techniques increasingly in line with the divergent and integrated thinking of the expert. Initially, statistics moved away from interdisciplinary work with science, assuming theories based on probabilistic independence due to the great convenience in the computational solution. Such is the case of the well-known linear models, where the assumption of independent variables allows results as powerful, but difficult to achieve in reality, such as the Gauss-Markov theorem. But perhaps the example of greatest mathematical convenience and widely disseminated in statistics corresponds to the information from probabilistically independent samples that immensely facilitate a definition of tractable likelihood formed by the simple product of the probability densities of each variable. Experts, who deep down know of a high dependence on the samples of their experiments, have had no alternative but to use classical likelihood to approximate the highly dependent phenomena that emerge from their laboratories. A more robust modeling of the real world undoubtedly requires greater planning and statistical analysis of experiments in an interdisciplinary dialogue with the expert, who dictates the main postulates of the phenomena they are trying to decipher. In this sense, statistics does not command decisions and applications, but rather realistically guides the solution that the expert guides. Instead of establishing its classic postulates about independence, normality, estimability, linearity, it seeks to resolve the paradigms that emerge from modern modeling; methodological proposals that increasingly speak of the opposite: dependence, skewed or non-Gaussian distributions, functions of multiple local minima or maxima, nonlinearity and nonparametric statistics, etc. In response to these requirements, statistics has proposed separate theories of great complexity, but they are still not very popular and only exist in the literature that is also frequented by statisticians, not by users of other sciences. The best example speaks of normality, which, like the independence of variables, is almost never met. The normal model has been replaced by elliptical contour distributions, allowing probability distributions to be considered with heavier and lighter tails and a greater or lesser degree of kurtosis, among other significant characteristics. Although a major problem emerges when the underlying distribution is not known a priori, then the decision is left to adjustment criteria, without any explanation from expert science. In this context, invariance under complete families of distributions emerge of utmost importance, so that the results are applied regardless of knowledge of the base distribution.

In some experiments and phenomena where more than one random variable is studied, say $X$ e $Y$, it is assumed that these variables are probabilistically independent with densities $dF_{X}(x)$ and $dG_{Y}(y)$. Then, if we are interested in the joint density of $X$ and $Y$, this reduces to the elementary product $dF_{X}(x)dG_{Y}(y)$. However, as we have said, modern chaos theories and divergent modeling of complex holistic problems warn against this assumption of independence, in which case finding the joint density is not an easy task. For example, in a study of climatological data it is of interest to study the random variables associated with temperature and
precipitation. For practical purposes these variables are considered independent, with normal and gamma distributions, respectively. But it is known that the temperature and precipitation are not probabilistically independent random variables. Then we open the question about a joint distribution of $X$ and $Y$ when the variables are not probabilistically independent. Furthermore, it would be desirable that given
the joint density function of the probabilistically dependent variables $X$ and $Y$, the corresponding marginal distributions of the random variables $X$ and $Y$ were the marginal distributions that are usually assumed under independence.For the univariate case, several bivariate distributions have been studied by different authors, the models include gamma-beta, gamma-gamma, beta-beta among others. They have found applications in problems of hydrology, finance and other areas disciplines, see \citet{ln:82},\citet{cn:84}, \citet{n:07, n:13}, \citet{ol:03} and \citet{spj:14} and references therein. In general, in these works, joint distributions of dependent random variables are constructed, making changes of variables on independent random variables. From the matrix point of view bimatrix variate type distribution have been studied by several authors, see \citet{or:64}, \citet{dggj:10a, dggj:10b,dggj:11}, and \citet{brea:11} among others. \citet{e:11} presents a very complete exposition of the advances in matrix variate case up to that moment. The authors have studied this problem for scalar and vector cases in \citet{dgclpr:22} and a matrix case in the multimatricvariate version in \citet{dgcl:22}.
Finally, a multimatrix theory that solves the previous problem but that cannot be computed would add to the hundreds of theoretical results in matrix analysis developed in the last 70 years. Then we expect that the joint distribution functions of several matrices are free of approximations and truncation of complex series of polynomials of matrix arguments. 

The above discussion is placed in the present article by proposing a new method to establish computable joint density functions of two or more random matrices, which shall be termed \textit{multimatrix variate distributions}\footnote{\citet{gm:93}proposed the term \textit{matrix multivariate} instead of \textit{matrix variate}, commonly used in multivariate analysis literature, \citet{gv:93}, among many other authors. The term \textit{matricvariate} was introducing by \citet{di:67} to distinguish between the two possible expressions of the density  matrix multivariate $T$-distribution. A detail explanation about this topic is given in \citet[see Section 3]{dggj:06}.}, such that the corresponding marginal distributions are known. Section \ref{sec:1} establishes the notation to be used and some preliminary results on the calculation of Jacobians and some integrals of interest as well as the definition of the matrix variate elliptical contoured distributions. The main distributional results are collected in Section \ref{sec:2}. Some related distributions  of multimatrix variate distributions and particular generalisations  are obtained in Section \ref{sec:3}. Finally, Section \ref{sec:4} provides a dependent sample emergent from a problem of molecular docking into a new cavity of SARS-CoV-2. In particular, the joint distribution of the dependent sample is estimated by a simple optimisation routine.

\section{Preliminary results}\label{sec:2}

Matrix notations, matrix variate elliptical contoured distributions, and Jacobians are presented in this section.
First, we start with notations and terminologies. $\mathbf{A}\in \Re^{n \times m}$ denotes a \emph{matrix} with $n$
rows and $m$ columns; $\mathbf{A}'\in \Re^{m \times n}$ is the \emph{transpose matrix}, and if $\mathbf{A}\in \Re^{n \times n}$ has an \emph{inverse}, it shall be denoted by $\mathbf{A}^{-1} \in \Re^{n \times n}$. $\mathbf{A}\in \Re^{n \times n}$ is a \emph{symmetric matrix} if $\mathbf{A} = \mathbf{A}'$. If all their eigenvalues are positive then $\mathbf{A}$ is a \emph{positive definite
matrix},  a fact denoted as $\mathbf{A} > \mathbf{0}$. An \emph{identity matrix} shall be denoted by $\mathbf{I}\in \Re^{n \times n}$. To specify the size of the identity, we shall use $\mathbf{I}_{n}$. $\tr (\mathbf{A})$ denotes the trace of matrix $\mathbf{A} \in \Re^{m \times m}$. If $\mathbf{A}\in \Re^{n \times m}$ then by $\vec (\mathbf{A})$ we mean the $mn \times 1$ vector formed by stacking the columns of $\mathbf{A}$ under each other; that is, if $\mathbf{A} = [\mathbf{a}_{1}\mathbf{a}_{2}\dots
\mathbf{a}_{m}]$, where $\mathbf{a}_{j} \in \Re^{n \times 1}$ for $j = 1, 2, \dots,m$
$$
  \vec(\mathbf{A})= \left [
                     \begin{array}{c}
                       \mathbf{a}_{1} \\
                       \mathbf{a}_{2} \\
                       \vdots \\
                       \mathbf{a}_{m}
                     \end{array}
                    \right ].
$$
The \textit{Frobenius norm} of a matrix $\mathbf{A}$ shall be denoted as $||\mathbf{A}||$. Typically the
Frobenius norm is denoted as $||\mathbf{A}||_{F}$, to differentiate it from other matrix norms. Since
we shall use only the Frobenius norm, it just be denoted as $||\mathbf{A}||$. It is
defined by
$$
  ||\mathbf{A}|| = \sqrt{\tr(\mathbf{A}'\mathbf{A})}=\sqrt{\vec'(\mathbf{A})\vec(\mathbf{A})}.
$$

Finally, $\mathcal{V}_{m,n}$ denotes the \emph{Stiefel manifold}, the space of all matrices
$\mathbf{H}_{1} \in \Re^{n \times m}$ ($n \geq m$) with orthogonal columns, that is, $\mathcal{V}_{m,n} =
\{\mathbf{H}_{1} \in \Re^{n \times m}; \mathbf{H}'_{1}\mathbf{H}_{1} = \mathbf{I}_{m}\}$. In addition, if
$(\mathbf{H}'_{1}d\mathbf{H}_{1})$ defines an \textit{invariant measure on the Stiefel manifold}
$\mathcal{V}_{m,n}$, from Theorem 2.1.15, p. 70 in \citet{mh:05},
\begin{equation}\label{evs}
  \int_{\mathcal{V}_{m,n}} (\mathbf{H}'_{1}d\mathbf{H}_{1}) = \frac{2^{m} \pi^{mn/2}}{\Gamma_{m}[n/2]}.
\end{equation}
where $\Gamma_{m}[a]$ denotes the multivariate Gamma function, see \citet[Definiton 2.1.10, p. 61]{mh:05}

Next a result about Jacobians is derived.

\begin{theorem}\label{teoJ}
Suppose that $\mathbf{X} \in \Re^{n \times m}$ and $\mathbf{Y} \in \Re^{n \times m}$ are matrices with
mathematically independent elements.
\begin{description}
  \item[i)] Let $\mathbf{Y} = (1- \tr \mathbf{X}'\mathbf{X})^{-1/2}\mathbf{X}$. Then
     \begin{equation}\label{eq2}
        (d\mathbf{Y}) = (1- \tr \mathbf{X}'\mathbf{X})^{-(nm/2+1)} (d\mathbf{X}).
     \end{equation}
  \item[ii)] If $\mathbf{X} = (1+ \tr\mathbf{Y}'\mathbf{Y})^{-1/2}\mathbf{Y}$, we have
     \begin{equation}\label{eq3}
        (d\mathbf{X}) = (1+ \tr\mathbf{Y}'\mathbf{Y})^{-(nm/2+1)} (d\mathbf{Y}).
     \end{equation}
\end{description}
\end{theorem}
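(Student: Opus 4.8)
The plan is to reduce the matrix Jacobian to an ordinary vector Jacobian by vectorising. Setting $\mathbf{x} = \vec(\mathbf{X})$ and $\mathbf{y} = \vec(\mathbf{Y})$ in $\Re^{nm}$, one has $\tr\mathbf{X}'\mathbf{X} = \mathbf{x}'\mathbf{x}$, and $(d\mathbf{X})$, $(d\mathbf{Y})$ coincide with $(d\mathbf{x})$, $(d\mathbf{y})$, so part i) amounts to computing the Jacobian of the radial map $\mathbf{y} = (1-\mathbf{x}'\mathbf{x})^{-1/2}\mathbf{x}$ on the open ball $\{\mathbf{x}'\mathbf{x} < 1\}\subset\Re^{nm}$, and part ii) is the Jacobian of $\mathbf{x} = (1+\mathbf{y}'\mathbf{y})^{-1/2}\mathbf{y}$ on all of $\Re^{nm}$.

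For i), I would differentiate componentwise, $\partial y_i/\partial x_j = (1-\mathbf{x}'\mathbf{x})^{-1/2}\delta_{ij} + (1-\mathbf{x}'\mathbf{x})^{-3/2}x_ix_j$, so that $\partial\mathbf{y}/\partial\mathbf{x} = (1-\mathbf{x}'\mathbf{x})^{-1/2}(\mathbf{I}_{nm} + (1-\mathbf{x}'\mathbf{x})^{-1}\mathbf{x}\mathbf{x}')$. Factoring out the scalar contributes $(1-\mathbf{x}'\mathbf{x})^{-nm/2}$ to the determinant, while the matrix determinant lemma $\det(\mathbf{I}_{nm}+\mathbf{u}\mathbf{v}')=1+\mathbf{v}'\mathbf{u}$ evaluates the rank-one update to $1+(1-\mathbf{x}'\mathbf{x})^{-1}\mathbf{x}'\mathbf{x}=(1-\mathbf{x}'\mathbf{x})^{-1}$. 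The product of the two factors has exponent $-(nm/2+1)$, giving \eqref{eq2}.

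For ii), the quickest route is to note that the two maps are mutual inverses: from $\mathbf{Y}=(1-\tr\mathbf{X}'\mathbf{X})^{-1/2}\mathbf{X}$ one gets $\tr\mathbf{Y}'\mathbf{Y}=\tr\mathbf{X}'\mathbf{X}/(1-\tr\mathbf{X}'\mathbf{X})$, hence $1+\tr\mathbf{Y}'\mathbf{Y}=(1-\tr\mathbf{X}'\mathbf{X})^{-1}$ and $\mathbf{X}=(1+\tr\mathbf{Y}'\mathbf{Y})^{-1/2}\mathbf{Y}$. Then \eqref{eq3} follows from \eqref{eq2} by taking the reciprocal Jacobian and rewriting $1-\tr\mathbf{X}'\mathbf{X}$ in terms of $\tr\mathbf{Y}'\mathbf{Y}$. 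Alternatively one repeats the direct computation, the only change being a sign, $\partial\mathbf{x}/\partial\mathbf{y}=(1+\mathbf{y}'\mathbf{y})^{-1/2}(\mathbf{I}_{nm}-(1+\mathbf{y}'\mathbf{y})^{-1}\mathbf{y}\mathbf{y}')$, whose determinant is again $(1+\mathbf{y}'\mathbf{y})^{-(nm/2+1)}$ by the same lemma.

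I do not expect a genuine obstacle here; the argument is essentially a one-line computation once the vectorisation is in place. The only point calling for care is the bookkeeping of the two determinant contributions — the power $-nm/2$ from the scalar prefactor and the extra $-1$ from the rank-one correction — which combine to give the ``$+1$'' in the exponent and are easy to conflate. It is also worth stating explicitly at the outset that the maps are smooth diffeomorphisms precisely on the indicated domains, so that the Jacobian identities are meaningful there.
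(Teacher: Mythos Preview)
Your proof is correct and follows the same route as the paper: both reduce the matrix statement to the vector case via $\mathbf{x}=\vec\mathbf{X}$, $\mathbf{y}=\vec\mathbf{Y}$. The only difference is that the paper then invokes the vector Jacobian from \citet[Theorem 1]{dgclpr:22}, whereas you carry out that computation explicitly with the matrix determinant lemma, which makes your argument self-contained.
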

\begin{proof}
\textbf{i}) The proof follows from Theorem 1, \citet{dgclpr:22} observing that $\mathbf{Y} = (1- \tr
\mathbf{X}'\mathbf{X})^{-1/2}\mathbf{X}$ can be written as $\mathbf{y} =
(1-\mathbf{x}'\mathbf{x})^{-1/2}\mathbf{x}$, where $\mathbf{x} = \vec \mathbf{X} \in \Re^{mn}$ and
$\mathbf{y} = \vec \mathbf{Y} \in \Re^{mn}$. \textbf{ii}). Its proof is analogous to one given for \textbf{i}).
\end{proof}

Now, let $\mathbf{V} \in \Re^{N \times m}$ random matrix with a \textit{matrix variate elliptical
distribution} with respect to the Lebesgue measure $(d\mathbf{V})$. Therefore its density function is
given by
\begin{equation}\label{elliptical}
 dF_{_{\mathbf{V}}} (\mathbf{V}) = |\mathbf{\Sigma}|^{-N/2}|\mathbf{\Theta}|^{-m/2}
  h\left\{\tr\left[(\mathbf{V}-\boldsymbol{\mu})^{T}\mathbf{\Sigma}^{-1}(\mathbf{V}-
  \boldsymbol{\mu})\mathbf{\Theta}^{-1}\right]\right\}(d\mathbf{V}).
\end{equation}
The location  parameter is $\boldsymbol{\mu}\in \Re^{N \times m}$; and the scale parameters
$\mathbf{\Sigma}\in \Re^{N \times N}$ and $\mathbf{\Theta}\in \Re^{m \times m}$, are positive definite
matrices. The distribution shall be denoted by $\mathbf{V}\sim \mathcal{E}_{N \times m}(\boldsymbol{\mu}
,\mathbf{\Sigma}, \mathbf{\Theta}; h)$, and indexed by the kernel function $h\mbox{: } \Re \to
[0,\infty)$, where $\int_{0}^\infty u^{Nm/2-1}h(u)du < \infty$.

When $\boldsymbol{\mu} = \mathbf{0}$, $\mathbf{\Sigma} = \mathbf{I}_{N}$ and $\mathbf{\Theta} =
\mathbf{I}_{m}$ a special case of a matrix variate elliptical distribution appears, in this case it is
said that $\mathbf{V}$ has a \textit{matrix variate spherical distribution}.

Note that for constant $a \in \Re$, the substitution $v = u/a$ and $(du) = a(dv)$ in \citet[Equation 2.21, p. 26]{fzn:90} provides that
\begin{equation}\label{int}
    \int_{v > 0} v^{Nm/2-1}h(a v) (dv)= \frac{a^{-Nm/2} \Gamma_{1}[Nm/2]}{\pi^{Nm/2}}.
\end{equation}

\section{Multimatrixvariate distributions}\label{sec:3}

Assume that $\mathbf{X}\sim \mathcal{E}_{N \times m}(\mathbf{0}, \mathbf{I}_{N}, \mathbf{I}_{m}; h)$,
such that $n_{0}+n_{1}+ \cdots + n_{k} = N$, and $\mathbf{X} = \left(\mathbf{X}'_{0},\mathbf{X}'_{1},
\dots, \mathbf{X}'_{k} \right)'$, $n_{i} \geq m \geq 1$. Then (\ref{elliptical}) can be written as
\begin{equation}\label{eq0}
    dF_{\mathbf{X}_{0},\mathbf{X}_{1}, \dots,\mathbf{X}_{k}}(\mathbf{X}_{1}, \dots,\mathbf{X}_{k}) =
     h[\tr(\mathbf{X}'_{0}\mathbf{X}_{0}+ \mathbf{X}'_{1}\mathbf{X}_{1}+\cdots+\mathbf{X}'_{k}
     \mathbf{X}_{k})]\bigwedge_{i=0}^{k}(d\mathbf{X}_{i}),
\end{equation}
or in terms of the Frobenius norm, we can rewrite (\ref{eq0}) as
\begin{equation*}
    dF_{\mathbf{X}_{0},\mathbf{X}_{1}, \dots,\mathbf{X}_{k}}(\mathbf{X}_{1}, \dots,\mathbf{X}_{k}) =
     h\left(||\mathbf{X}_{0}||^{2}+ ||\mathbf{X}_{1}||^{2}+\cdots+||\mathbf{X}_{k}||^{2}\right )
     \bigwedge_{i=0}^{k}(d\mathbf{X}_{i}),
\end{equation*}
where $\mathbf{X}_{i} \in \Re^{n_{i} \times m}$, $i = 0,1, \dots, k$.
Note that the random matrices $\mathbf{X}_{0}, \mathbf{X}_{1}, \dots,\mathbf{X}_{k}$ are
probabilistically dependent. Furthermore, only under a matrix variate normal distribution, the random matrices are independent, see \citet{fz:90}. 

Analogously to \citet[Equation 4.2]{dgclpr:22} but now defining $V_{i} = ||\mathbf{X}_{i}||^{2}$, $i = 0,1,\dots,k$, it is
obtained
\begin{equation}\label{gg}
    dF_{V_{0},\ldots,V_{k}}(v_{0},\ldots,v_{k})=\frac{\pi^{Nm/2}}
    {\displaystyle\prod_{i=0}^{k}\Gamma\left[n_{i}m/2\right]}
    h\left(\sum_{i=0}^{k}v_{i}\right)\prod_{i=0}^{k}v_{i}^{n_{i}m/2-1}\bigwedge_{i=0}^{k}(du_{i}),
\end{equation}
where $N = n_{0}+\cdots+n_{k}$, this distribution shall be termed \emph{multivariate generalised Gamma distribution} and its univariate case ($i=0$)  shall be termed \emph{generalised Gamma distribution}.

\begin{theorem}\label{gge} Suppose that $\mathbf{X} = \left(\mathbf{X}'_{0}, \dots,
\mathbf{X}'_{k} \right)'$ has  a matrix variate spherical distribution, with $\mathbf{X}_{i} \in
\Re^{n_{i} \times m}$, $n_{i} \geq m$, $i = 0,1, \dots, k$. Define $V = ||\mathbf{X}_{0}||^{2}$. Then,
the joint density $dF_{V,\mathbf{X}_{1}, \dots,\mathbf{X}_{k}}(v,\mathbf{X}_{1}, \dots,\mathbf{X}_{k})$
is given by
\begin{equation}\label{mgge}
   \frac{\pi^{n_{0}m/2}}{\Gamma_{1}[n_{0}m/2]} h\left[v+\displaystyle\sum_{i=1}^{k}||\mathbf{X}_{i}||^{2}\right]
    v^{n_{0}m/2-1} (dv)\bigwedge_{i=1}^{k}\left(d\mathbf{X}_{i}\right),
\end{equation}
where $V > 0$. This distribution shall be termed \emph{multimatrix variate generalised Gamma - Elliptical
distribution}.
\end{theorem}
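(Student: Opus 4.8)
The plan is to start from the spherical density in the form given by equation~(\ref{eq0}), namely
\[
dF_{\mathbf{X}_{0},\mathbf{X}_{1},\dots,\mathbf{X}_{k}}(\mathbf{X}_{0},\dots,\mathbf{X}_{k})
= h\!\left(||\mathbf{X}_{0}||^{2}+\sum_{i=1}^{k}||\mathbf{X}_{i}||^{2}\right)\bigwedge_{i=0}^{k}(d\mathbf{X}_{i}),
\]
and integrate out only the block $\mathbf{X}_{0}\in\Re^{n_{0}\times m}$ while keeping $\mathbf{X}_{1},\dots,\mathbf{X}_{k}$ fixed and replacing the dependence on $\mathbf{X}_{0}$ by the scalar $V=||\mathbf{X}_{0}||^{2}$. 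Concretely, I would condition on $\mathbf{X}_{1},\dots,\mathbf{X}_{k}$, write $c=\sum_{i=1}^{k}||\mathbf{X}_{i}||^{2}$ as a constant in that inner step, and compute the pushforward of the Lebesgue measure $(d\mathbf{X}_{0})$ under the map $\mathbf{X}_{0}\mapsto V=\tr(\mathbf{X}_{0}'\mathbf{X}_{0})$.

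The cleanest route for that pushforward is the standard polar decomposition on $\Re^{n_{0}m}$: vectorise, $\mathbf{x}_{0}=\vec\mathbf{X}_{0}\in\Re^{n_{0}m}$, so that $||\mathbf{X}_{0}||^{2}=\mathbf{x}_{0}'\mathbf{x}_{0}=r^{2}$ with $r=||\mathbf{x}_{0}||$, and $(d\mathbf{x}_{0})=r^{n_{0}m-1}(dr)\,(\text{unit sphere measure})$. Integrating the unit-sphere factor gives the surface area $2\pi^{n_{0}m/2}/\Gamma_{1}[n_{0}m/2]$, and then the change of variable $V=r^{2}$, $(dr)=\tfrac12 V^{-1/2}(dV)$, converts $r^{n_{0}m-1}(dr)$ into $\tfrac12 V^{n_{0}m/2-1}(dV)$. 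Combining the constants yields exactly the factor $\dfrac{\pi^{n_{0}m/2}}{\Gamma_{1}[n_{0}m/2]}\,V^{n_{0}m/2-1}$ in front of $h\!\left(V+\sum_{i=1}^{k}||\mathbf{X}_{i}||^{2}\right)$, which is (\ref{mgge}). Alternatively, one can invoke (\ref{gg}) directly: that equation already records the joint law of $(V_{0},\dots,V_{k})$ with $V_{i}=||\mathbf{X}_{i}||^{2}$; reversing the marginalisation that produced it — i.e.\ keeping $V_{0}=V$ explicit but re-expressing $V_{1},\dots,V_{k}$ back in terms of the original matrices $\mathbf{X}_{1},\dots,\mathbf{X}_{k}$ via the same surface-area constants $\pi^{n_{i}m/2}/\Gamma_{1}[n_{i}m/2]$ used to derive (\ref{gg}) — immediately gives the stated mixed density. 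I would present the vectorised polar-coordinate computation as the primary argument since it is self-contained.

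The remaining points are bookkeeping: one must check that the product of constants telescopes correctly (the $\pi^{n_{0}m/2}$ and the $\Gamma_{1}[n_{0}m/2]=\Gamma[n_{0}m/2]$ match, and the leftover powers of $\pi$ from the other blocks are absorbed because those blocks are \emph{not} integrated here), confirm the support $V>0$ (the map $\mathbf{X}_{0}\mapsto||\mathbf{X}_{0}||^{2}$ is onto $(0,\infty)$ off a null set), and note that the wedge product $\bigwedge_{i=1}^{k}(d\mathbf{X}_{i})$ is untouched throughout. I do not anticipate a genuine obstacle here; the only mild care needed is to keep track of the normalising constants and the Jacobian factor $\tfrac12 V^{-1/2}$ from $V=r^{2}$ so that the final exponent of $V$ comes out as $n_{0}m/2-1$ rather than $n_{0}m/2-1/2$ or similar, and to state clearly that the identity is understood as an equality of densities with respect to $(dv)\bigwedge_{i=1}^{k}(d\mathbf{X}_{i})$.
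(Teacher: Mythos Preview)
Your proposal is correct and follows essentially the same route as the paper: start from the spherical density, replace $\mathbf{X}_{0}$ by the scalar $V=||\mathbf{X}_{0}||^{2}$ via a polar/Stiefel decomposition, and integrate out the angular part. The only cosmetic difference is that the paper phrases the key step through \citet[Theorem 2.1.14]{mh:05} and the Stiefel manifold $\mathcal{V}_{1,n_{0}m}$ together with (\ref{evs}), whereas you vectorise and use ordinary polar coordinates on $\Re^{n_{0}m}$ with the surface-area constant $2\pi^{n_{0}m/2}/\Gamma_{1}[n_{0}m/2]$; since $\mathcal{V}_{1,n_{0}m}$ \emph{is} the unit sphere $S^{n_{0}m-1}$, the two computations are identical.
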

\begin{proof}
We have that 
\begin{equation*}
    dF_{\mathbf{X}_{0},\mathbf{X}_{1}, \dots,\mathbf{X}_{k}}(\mathbf{X}_{1}, \dots,\mathbf{X}_{k}) =
     h\left(||\mathbf{X}_{0}||^{2}+ ||\mathbf{X}_{1}||^{2}+\cdots+||\mathbf{X}_{k}||^{2}\right )
     \bigwedge_{i=0}^{k}(d\mathbf{X}_{i}).
\end{equation*}
Define $V = ||\mathbf{X}_{0}||^{2}$ and from \citet[Theorem 2.1.14, p. 65]{mh:05}, we have that $(d\mathbf{X}_{0}) = 2^{-1}v^{n_{0}m/2 - 1}(dv)\wedge(\mathbf{h}_{1}d\mathbf{h}_{1})$, where $\mathbf{h}_{1} \in \mathcal{V}_{1,n_{0}m}$. \newline Thus, the density  $dF_{V, \mathbf{h}_{1},\mathbf{X}_{1}, \dots,\mathbf{X}_{k}}(v, \mathbf{h}_{1},\mathbf{X}_{1}, \dots,\mathbf{X}_{k})$ is
\begin{equation*}
       \frac{v^{n_{0}m/2 - 1}}{2} h\left(v+ ||\mathbf{X}_{1}||^{2}+\cdots+||\mathbf{X}_{k}||^{2}\right )
      (dv)\wedge (\mathbf{h}_{1}d\mathbf{h}_{1})\bigwedge_{i=1}^{k}(d\mathbf{X}_{i}).
\end{equation*}
Integration over $\mathbf{h}_{1} \in \mathcal{V}_{1,n_{0}m}$ by using (\ref{evs}) gives the required result. 
\end{proof}

\begin{theorem}\label{ggt}
Assume that $\mathbf{X} = \left(\mathbf{X}'_{0}, \dots,\mathbf{X}'_{k} \right)'$ has a matrix variate
spherical distribution, with $\mathbf{X}_{i} \in \Re^{n_{i} \times m}$, $n_{i} \geq m$, $i = 0,1, \dots,
k$. Define $V = ||\mathbf{X}_{0}||^{2}$ and $\mathbf{T}_{i} = V ^{-1/2}\mathbf{X}_{i}$, $i = 1,\dots,k$.
The joint density $dF_{V,\mathbf{T}_{1}, \dots,\mathbf{T}_{k}}(v,\mathbf{T}_{1}, \dots,\mathbf{T}_{k})$
is given by
\begin{equation}\label{mggp7}
   \frac{\pi^{n_{0}m/2}}{\Gamma_{1}[n_{0}m/2]} h\left[v\left(1+\displaystyle\sum_{i=1}^{k}||\mathbf{T}_{i}||^{2}\right)\right]
    v^{Nm/2-1} (dv)\bigwedge_{i=1}^{k}\left(d\mathbf{T}_{i}\right),
\end{equation}
where $N = n_{0}+n_{1}+\cdots+n_{k}$, $V > 0$ and $\mathbf{T}_{i} \in \Re^{n_{i} \times m}$, $i = 1,\dots,k$. This distribution shall be termed \emph{multimatrix variate generalised Gamma - Pearson type VII distribution}. Moreover, the termed  \emph{multimatrix variate Pearson type VII} is the marginal density $dF_{\mathbf{T}_{1}, \dots,\mathbf{T}_{k}}(\mathbf{T}_{1}, \dots,\mathbf{T}_{k})$ of $\mathbf{T}_{1},
\dots,\mathbf{T}_{k}$ and is given by
\begin{equation}\label{mp7}
   \frac{\Gamma_{1}[Nm/2]}{\pi^{(N-n_{0})m/2}\Gamma_{1}[n_{0}m/2]}
  \left(1+\displaystyle\sum_{i=1}^{k}||\mathbf{T}_{i}||^{2}\right)^{-Nm/2}
  \bigwedge_{i=1}^{k}\left(d\mathbf{T}_{i}\right),
\end{equation}
where $\mathbf{T}_{i} \in \Re^{n_{i} \times m}$, $n_{i} \geq m$ and $N = n_{0}+n_{1}+\cdots+n_{k}$.
\end{theorem}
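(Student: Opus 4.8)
The plan is to obtain the joint density (\ref{mggp7}) directly from Theorem \ref{gge} by a single scale change, and then to obtain the marginal (\ref{mp7}) by integrating out $v$. Building on Theorem \ref{gge} is the cleanest route, since an equivalent approach starting from (\ref{eq0}) would re-derive the polar-type decomposition of $\mathbf{X}_{0}$ already carried out there.

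First I would start from the density in (\ref{mgge}) supplied by Theorem \ref{gge}, which applies here because $\mathbf{X}$ is matrix variate spherical, and perform, for each fixed $v>0$, the change of variables $\mathbf{X}_{i}=v^{1/2}\mathbf{T}_{i}$ on $\Re^{n_{i}\times m}$, $i=1,\dots,k$. This map is linear with Jacobian $(v^{1/2})^{n_{i}m}=v^{n_{i}m/2}$, so $\bigwedge_{i=1}^{k}(d\mathbf{X}_{i})=v^{(N-n_{0})m/2}\bigwedge_{i=1}^{k}(d\mathbf{T}_{i})$ with $N=n_{0}+n_{1}+\cdots+n_{k}$. Homogeneity of the Frobenius norm gives $\|\mathbf{X}_{i}\|^{2}=v\|\mathbf{T}_{i}\|^{2}$, hence the argument of $h$ in (\ref{mgge}) becomes $v\bigl(1+\sum_{i=1}^{k}\|\mathbf{T}_{i}\|^{2}\bigr)$. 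Substituting and collecting the powers of $v$, namely $n_{0}m/2-1+(N-n_{0})m/2=Nm/2-1$, produces exactly (\ref{mggp7}).

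For the second assertion I would integrate (\ref{mggp7}) over $v>0$ with $a:=1+\sum_{i=1}^{k}\|\mathbf{T}_{i}\|^{2}$ held fixed, and apply the integral identity (\ref{int}) with this value of $a$; this is legitimate because the defining integrability condition $\int_{0}^{\infty}u^{Nm/2-1}h(u)\,du<\infty$ for the kernel $h$ of $\mathcal{E}_{N\times m}$ is precisely what (\ref{int}) requires. The $v$-integral then contributes the factor $a^{-Nm/2}$, while the constant $\dfrac{\pi^{n_{0}m/2}}{\Gamma_{1}[n_{0}m/2]}\cdot\dfrac{\Gamma_{1}[Nm/2]}{\pi^{Nm/2}}$ simplifies to $\dfrac{\Gamma_{1}[Nm/2]}{\pi^{(N-n_{0})m/2}\Gamma_{1}[n_{0}m/2]}$, giving (\ref{mp7}).

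I do not expect a genuine obstacle here; the one place that needs care is the Jacobian exponent in the scale change: each $\mathbf{T}_{i}$ carries $n_{i}m$ scalar coordinates, so the total scaling factor is $v^{\sum_{i=1}^{k}n_{i}m/2}=v^{(N-n_{0})m/2}$, and it is exactly this factor that upgrades the power of $v$ from $n_{0}m/2-1$ in (\ref{mgge}) to $Nm/2-1$ in (\ref{mggp7}). Everything else is routine tracking of constants.
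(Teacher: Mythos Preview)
Your proposal is correct and follows essentially the same approach as the paper: start from (\ref{mgge}) in Theorem \ref{gge}, apply the scale change $\mathbf{T}_{i}=V^{-1/2}\mathbf{X}_{i}$ with Jacobian $\bigwedge_{i=1}^{k}(d\mathbf{X}_{i})=\prod_{i=1}^{k}v^{n_{i}m/2}\bigwedge_{i=1}^{k}(d\mathbf{T}_{i})$ (the paper cites \citet[Theorem 2.1.4, p.~57]{mh:05} for this), and then integrate over $V>0$ via (\ref{int}) to obtain (\ref{mp7}). Your write-up is simply more explicit about the bookkeeping of powers of $v$ and the normalising constants.
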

\begin{proof}
The density (\ref{mggp7}) follows from (\ref{mgge}) by defining $\mathbf{T}_{i}= V^{-1/2}\mathbf{X}_{i}$, $i = 1,\dots,k$. Hence by \citet[Theorem 2.1.4, p. 57]{mh:05}, we have
$$
   \bigwedge_{i=1}^{k}(d\mathbf{X}_{i})= \prod_{i=1}^{k}v^{n_{i}m/2}\bigwedge_{i=1}^{k}(d\mathbf{T}_{i}).
$$
And then the required result follows.
Now, integrating (\ref{mggp7}) over $V>0$, by using (\ref{int}), provides the density (\ref{mp7}).
\end{proof}

\begin{theorem}\label{ggpII}
Suppose that $\mathbf{X} = \left(\mathbf{X}'_{0}, \dots, \mathbf{X}'_{k} \right)'$ has a matrix variate
spherical distribution, with $\mathbf{X}_{i} \in \Re^{n_{i} \times m}$, $n_{i} \geq m$, $i = 0,1, \dots,
k$. Define $V = ||\mathbf{X}_{0}||^{2}$ and $\mathbf{R}_{i} = \left(V +
||\mathbf{X}_{i}||^{2}\right)^{-1/2} \mathbf{X}_{i}$, $i = 1,\dots,k$. Then the joint density of
$V,\mathbf{R}_{1}, \dots,\mathbf{R}_{k}$, denoted as $dF_{V,\mathbf{R}_{1},
\dots,\mathbf{R}_{k}}(v,\mathbf{R}_{1}, \dots,\mathbf{R}_{k})$, is given by
$$
  \frac{\pi^{n_{0}m/2}}{\Gamma_{1}[n_{0}m/2]} v^{Nm/2-1} h\left[ v\left( 1+\displaystyle\sum_{i=1}^{k}
  \frac{||\mathbf{R}_{i}||^{2}}{\left(1- ||\mathbf{R}_{i}||^{2}\right)}\right)\right ]\hspace{4cm}
$$
\begin{equation}\label{ggp2}
  \hspace{4cm}
  \times  \prod_{i=i}^{k}\left(1-
  ||\mathbf{R}_{i}||^{2}\right)^{-n_{i}m/2-1}
  (dv)\bigwedge_{i=1}^{k}\left(d\mathbf{R}_{i}\right),
\end{equation}
where $N = n_{0}+n_{1}+\cdots+n_{k}$, $V > 0$ and $\mathbf{R}_{i} \in \Re^{n_{i} \times m}$, and $||\mathbf{R}_{i}||^{2} \leq 1$ $i =
1,\dots,k$. This distribution shall be termed \emph{multimatrix variate Generalised Gamma-Pearson type II
distribution}. Also,  the marginal density $dF_{\mathbf{R}_{1}, \dots,\mathbf{R}_{k}}(\mathbf{R}_{1},
\dots,\mathbf{R}_{k})$ is
$$
  \frac{\Gamma_{1}[Nm/2]}{\pi^{(N-n_{0})m/2}\Gamma_{1}[n_{0}m/2]} \left[ 1+\displaystyle\sum_{i=1}^{k}
  \frac{||\mathbf{R}_{i}||^{2}}{\left(1- ||\mathbf{R}_{i}||^{2}\right)}\right ]^{-Nm/2} \hspace{4cm}
$$
\begin{equation}\label{mp2}
  \hspace{4cm}\times  \prod_{i=i}^{k}\left(1-
  ||\mathbf{R}_{i}||^{2}\right)^{-n_{i}m/2-1}
  \bigwedge_{i=1}^{k}\left(d\mathbf{R}_{i}\right),
\end{equation}
which shall be termed \emph{multimatrix variate Pearson type II distribution}.
\end{theorem}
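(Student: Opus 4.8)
The plan is to derive (\ref{ggp2}) from the joint density (\ref{mgge}) of $V,\mathbf{X}_{1},\dots,\mathbf{X}_{k}$ established in Theorem \ref{gge}, by applying the change of variables that keeps $v$ fixed and replaces each $\mathbf{X}_{i}$ by $\mathbf{R}_{i}=(v+||\mathbf{X}_{i}||^{2})^{-1/2}\mathbf{X}_{i}$; the marginal (\ref{mp2}) will then follow by integrating $v$ out.

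First I would invert the map. Taking Frobenius norms in the definition of $\mathbf{R}_{i}$ gives $||\mathbf{R}_{i}||^{2}=||\mathbf{X}_{i}||^{2}/(v+||\mathbf{X}_{i}||^{2})$, whence $||\mathbf{X}_{i}||^{2}=v\,||\mathbf{R}_{i}||^{2}/(1-||\mathbf{R}_{i}||^{2})$ and $v+||\mathbf{X}_{i}||^{2}=v/(1-||\mathbf{R}_{i}||^{2})$ --- in particular $||\mathbf{R}_{i}||^{2}<1$, since $v>0$ --- and therefore $\mathbf{X}_{i}=v^{1/2}(1-||\mathbf{R}_{i}||^{2})^{-1/2}\mathbf{R}_{i}$.

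Next comes the Jacobian, which I would compute by factoring the map (for fixed $v$ and each $i$) as $\mathbf{X}_{i}\mapsto\mathbf{W}_{i}:=v^{-1/2}\mathbf{X}_{i}\mapsto\mathbf{R}_{i}$, where $\mathbf{W}_{i}=(1-||\mathbf{R}_{i}||^{2})^{-1/2}\mathbf{R}_{i}$. The first step scales an $n_{i}\times m$ matrix, so $(d\mathbf{X}_{i})=v^{n_{i}m/2}(d\mathbf{W}_{i})$ by the standard scaling Jacobian, \citet[Theorem 2.1.4, p. 57]{mh:05}; the second step is exactly part \textbf{i)} of Theorem \ref{teoJ} with $\mathbf{R}_{i}$ playing the role of $\mathbf{X}$ and $\mathbf{W}_{i}$ that of $\mathbf{Y}$, so $(d\mathbf{W}_{i})=(1-||\mathbf{R}_{i}||^{2})^{-(n_{i}m/2+1)}(d\mathbf{R}_{i})$. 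Because $\mathbf{R}_{i}$ depends only on $\mathbf{X}_{i}$ (and $v$), the Jacobian of the joint map is block-diagonal, and hence
$$
  \bigwedge_{i=1}^{k}(d\mathbf{X}_{i})=\left(\prod_{i=1}^{k}v^{n_{i}m/2}\left(1-||\mathbf{R}_{i}||^{2}\right)^{-(n_{i}m/2+1)}\right)\bigwedge_{i=1}^{k}(d\mathbf{R}_{i}).
$$
Plugging this, together with $v+\sum_{i=1}^{k}||\mathbf{X}_{i}||^{2}=v\left(1+\sum_{i=1}^{k}||\mathbf{R}_{i}||^{2}/(1-||\mathbf{R}_{i}||^{2})\right)$, into (\ref{mgge}), and noting that the powers of $v$ accumulate to $v^{n_{0}m/2-1}\prod_{i=1}^{k}v^{n_{i}m/2}=v^{Nm/2-1}$ (as $N=n_{0}+\cdots+n_{k}$), produces (\ref{ggp2}). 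For the marginal I would then integrate (\ref{ggp2}) over $v>0$ using (\ref{int}) with $a=1+\sum_{i=1}^{k}||\mathbf{R}_{i}||^{2}/(1-||\mathbf{R}_{i}||^{2})$, which turns $\int_{v>0}v^{Nm/2-1}h(av)(dv)$ into $a^{-Nm/2}\Gamma_{1}[Nm/2]/\pi^{Nm/2}$; combining constants via $\pi^{n_{0}m/2}/\pi^{Nm/2}=\pi^{-(N-n_{0})m/2}$ gives (\ref{mp2}).

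I expect the only genuinely delicate point to be the Jacobian bookkeeping: one must recognise that normalising each $\mathbf{X}_{i}$ with $v$ held fixed is, up to the scaling $v^{-1/2}$, precisely the $(1-\tr\,\cdot\,)^{-1/2}$-type map handled by Theorem \ref{teoJ}~\textbf{i)}, and must keep the $v^{-1/2}$ scaling separate --- it is this scaling that supplies the extra factor $v^{(N-n_{0})m/2}$ lifting the exponent of $v$ from $n_{0}m/2-1$ in (\ref{mgge}) to $Nm/2-1$ in (\ref{ggp2}). Everything else is a routine substitution given Theorem \ref{teoJ} and the identity (\ref{int}).
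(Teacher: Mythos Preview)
Your proof is correct and follows essentially the same route as the paper. The only cosmetic difference is that the paper starts from (\ref{mggp7}) in Theorem \ref{ggt} and applies the substitution $\mathbf{T}_{i}=(1-||\mathbf{R}_{i}||^{2})^{-1/2}\mathbf{R}_{i}$ together with Theorem \ref{teoJ}\,\textbf{i)}, whereas you start one step earlier from (\ref{mgge}) and factor $\mathbf{X}_{i}\mapsto\mathbf{R}_{i}$ through the intermediate $\mathbf{W}_{i}=v^{-1/2}\mathbf{X}_{i}$; since your $\mathbf{W}_{i}$ is precisely the paper's $\mathbf{T}_{i}$, the two arguments coincide.
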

\begin{proof}
Consider the substitution $\mathbf{T}_{i}=(1-||\mathbf{R}_{i}||^{2})^{-1/2}\mathbf{R}_{i}$, $i=1,\dots,k$ in (\ref{mggp7}) , hence by Theorem \ref{teoJ},
$$
  \bigwedge_{i=1}^{k}\left(d\mathbf{T}_{i}\right) = \prod_{i=i}^{k}\left(1- ||\mathbf{R}_{i}||^{2}\right)^{-n_{i}m/2-1} \bigwedge_{i=1}^{k}\left(d\mathbf{R}_{i}\right),
$$
and the desired result follows. Now (\ref{mp2}) follows by integration of (\ref{ggp2}) over $V>0$ via (\ref{int}).
\end{proof}

\begin{theorem}\label{mggb2}
Consider the hypotheses of Theorem \ref{ggt} and define $\mathbf{F}_{i} = \mathbf{T}'_{i}\mathbf{T}_{i} > \mathbf{0}$, $i = 1, \dots,k$. Then the joint density $dF_{V, \mathbf{F}_{1}, \dots,\mathbf{F}_{k}}(v,\mathbf{F}_{1}, \dots,\mathbf{F}_{k})$ is given by
\begin{equation}\label{ggb2}
   \frac{\pi^{Nm/2}v^{Nm/2-1}}{\Gamma_{1}[n_{0}m/2]}   \prod_{i=1}^{k} \left(\frac{|\mathbf{F}_{i}|^{(n_{i}-m-1)/2}}{\Gamma_{m}[n_{i}/2]}\right )
   h\left[v\left(1+\displaystyle\sum_{i=1}^{k}\tr\mathbf{F}_{i}\right)\right]
   (dv)\bigwedge_{i=1}^{k}\left(d\mathbf{F}_{i}\right).
\end{equation}
This distribution shall be termed \emph{multimatrix variate Generalised Gamma-beta type II distribution}.
\newline The corresponding marginal density function
 $dF_{\mathbf{F}_{1}, \dots, \mathbf{F}_{k}}(\mathbf{F}_{1}, \dots,\mathbf{F}_{k})$
is given by
\begin{equation}\label{mb2}
  \frac{\Gamma_{1}[Nm/2]}{\Gamma_{1}[n_{0}m/2]\displaystyle\prod_{i=1}^{k}\Gamma_{m}[n_{i}/2]}
  \prod_{i=1}^{k}|\mathbf{F}_{i}|^{(n_{i}-m-1)/2}
  \left(1+\displaystyle\sum_{i=1}^{k}\tr\mathbf{F}_{i}\right)^{-Nm/2}
  \bigwedge_{i=1}^{k}\left(d\mathbf{F}_{i}\right).
\end{equation}
and shall be termed \emph{multimatrix variate beta type II distribution}.
\end{theorem}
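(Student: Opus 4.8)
The plan is to push the joint density (\ref{mggp7}) of Theorem \ref{ggt} through the block-wise change of variables $\mathbf{T}_{i}\mapsto \mathbf{F}_{i}=\mathbf{T}'_{i}\mathbf{T}_{i}$, then integrate out the Stiefel-manifold factors that this decomposition introduces, and finally integrate over $V$ to obtain the marginal.

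First I would fix $i\in\{1,\dots,k\}$ and note that since $\mathbf{T}_{i}\in\Re^{n_{i}\times m}$ with $n_{i}\geq m$ and $\mathbf{F}_{i}=\mathbf{T}'_{i}\mathbf{T}_{i}>\mathbf{0}$, the polar-type decomposition $\mathbf{T}_{i}=\mathbf{H}_{1i}\mathbf{F}_{i}^{1/2}$ with $\mathbf{H}_{1i}=\mathbf{T}_{i}\mathbf{F}_{i}^{-1/2}\in\mathcal{V}_{m,n_{i}}$ applies, so that by \citet[Theorem 2.1.14, p. 65]{mh:05} one has $(d\mathbf{T}_{i})=2^{-m}|\mathbf{F}_{i}|^{(n_{i}-m-1)/2}(d\mathbf{F}_{i})(\mathbf{H}'_{1i}d\mathbf{H}_{1i})$. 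Since these transformations act on disjoint blocks, $\bigwedge_{i=1}^{k}(d\mathbf{T}_{i})$ factors into the product of the corresponding pieces. Moreover $||\mathbf{T}_{i}||^{2}=\tr\mathbf{T}'_{i}\mathbf{T}_{i}=\tr\mathbf{F}_{i}$, so the kernel argument in (\ref{mggp7}) becomes $v\bigl(1+\sum_{i=1}^{k}\tr\mathbf{F}_{i}\bigr)$, which no longer involves the $\mathbf{H}_{1i}$.

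Next I would integrate each $\mathbf{H}_{1i}$ over $\mathcal{V}_{m,n_{i}}$ using (\ref{evs}), which supplies a factor $2^{m}\pi^{n_{i}m/2}/\Gamma_{m}[n_{i}/2]$ and cancels the $2^{-m}$ coming from the Jacobian. Collecting constants then finishes (\ref{ggb2}): the prefactor $\pi^{n_{0}m/2}/\Gamma_{1}[n_{0}m/2]$ of (\ref{mggp7}) multiplied by $\prod_{i=1}^{k}\pi^{n_{i}m/2}=\pi^{(N-n_{0})m/2}$ gives $\pi^{Nm/2}$, the exponent $v^{Nm/2-1}$ (with $N=n_{0}+\cdots+n_{k}$) is already present in (\ref{mggp7}) and is unchanged, and the remaining factors are exactly $\prod_{i=1}^{k}|\mathbf{F}_{i}|^{(n_{i}-m-1)/2}/\Gamma_{m}[n_{i}/2]$. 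For the marginal (\ref{mb2}) I would integrate (\ref{ggb2}) over $V>0$ via (\ref{int}) with $a=1+\sum_{i=1}^{k}\tr\mathbf{F}_{i}$; the resulting factor $a^{-Nm/2}\Gamma_{1}[Nm/2]/\pi^{Nm/2}$ cancels the $\pi^{Nm/2}$, produces $\Gamma_{1}[Nm/2]$ in the numerator and the power $\bigl(1+\sum_{i=1}^{k}\tr\mathbf{F}_{i}\bigr)^{-Nm/2}$, yielding precisely (\ref{mb2}).

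I do not expect a genuine obstacle here; the proof is essentially bookkeeping. The one point that needs care is invoking the correct \emph{matrix} Jacobian for $\mathbf{T}_{i}\mapsto\mathbf{T}'_{i}\mathbf{T}_{i}$ (rather than the vector version used for $\mathbf{X}_{0}$ in the proof of Theorem \ref{gge}), together with checking that the $2^{-m}$ from that Jacobian and the $2^{m}$ from the Stiefel volume in (\ref{evs}) cancel, and keeping track of the identity $N=n_{0}+n_{1}+\cdots+n_{k}$ throughout the exponent and $\pi$-power collection.
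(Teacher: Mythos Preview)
Your proposal is correct and follows essentially the same route as the paper: apply the substitution $\mathbf{F}_{i}=\mathbf{T}'_{i}\mathbf{T}_{i}$ in (\ref{mggp7}) (and in (\ref{mp7})) via \citet[Theorem 2.1.14]{mh:05}, integrate out the Stiefel factors using (\ref{evs}), and then use (\ref{int}) to pass to the marginal. The only cosmetic difference is that the paper obtains (\ref{mb2}) by making the same substitution directly in the already-integrated density (\ref{mp7}), whereas you first derive (\ref{ggb2}) and then integrate over $V$; since (\ref{mp7}) itself comes from (\ref{mggp7}) by that very integration, the two orderings are equivalent.
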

\begin{proof}
Joint density functions (\ref{ggb2}) and (\ref{mb2}) follow from substitutions $\mathbf{F}_{i} = \mathbf{T}'_{i}\mathbf{T}_{i}$, $i =1,\dots,k$ in  (\ref{mggp7}) and (\ref{mp7}), respectively. First, note that \citet[Theorem 2.1.14, p. 66]{mh:05} provides
$$
  \bigwedge_{i=1}^{k}\left(d\mathbf{T}_{i}\right) = 2^{-mk}  \prod_{i=1}^{k}|\mathbf{F}_{i}|^{(n_{i}-m-1)/2} \bigwedge_{i=1}^{k}\left(d\mathbf{F}_{i}\right)\bigwedge_{i=1}^{k}\left(\mathbf{H}'_{1_{i}}d\mathbf{H}_{1_{i}}\right)
$$ 
where $\mathbf{H}_{1_{i}}\in \mathcal{V}_{n_{i},m}$, $i=1,\dots,k$. Hence, integrating over $\mathbf{H}_{1_{i}}\in \mathcal{V}_{n_{i},m}$ $i=1,\dots,k$ by (\ref{evs}) turns 
$$
  \int_{\mathbf{H}_{1_{1}}} \cdots \int_{\mathbf{H}_{1_{k}}} \bigwedge_{i=1}^{k}\left(\mathbf{H}'_{1_{i}}d\mathbf{H}_{1_{i}}\right) = \frac{2^{mk} \pi^{(N-n_{0})m/2}}{\displaystyle\prod_{i=1}^{k} \Gamma_{m}[n_{i}/2]}
$$
and the required result is derived. 
\end{proof}

A particular case of the density function (\ref{mb2}) is proposed in \citet[Problem 3.18, p.118]{mh:05}.

\begin{theorem}\label{ggbI}
Assuming that $\mathbf{B}_{i} = \mathbf{R}'_{i}\mathbf{R}_{i} > \mathbf{0}$ and $\tr \mathbf{B}_{i} \leq 1$ with $i = 1, \dots,k$, in Theorem
\ref{ggpII}. Then the joint density $dF_{V, \mathbf{B}_{1},\dots,\mathbf{B}_{k}}(v, \mathbf{B}_{1},
\dots,\mathbf{B}_{k})$ is
$$
  \frac{\pi^{Nm/2} \ v^{Nm/2-1}}{\Gamma_{1}[n_{0}m/2]}
  \prod_{i=1}^{k}\left(\frac{|\mathbf{B}_{i}|^{(ni-m-1)/2}}{\Gamma_{m}[n_{i}/2]}\right )
  h\left[ v\left( 1+\displaystyle\sum_{i=1}^{k} \frac{\tr\mathbf{B}_{i}}{\left(1-
  \tr\mathbf{B}_{i}\right)}\right)\right ]\hspace{3cm}
$$
\begin{equation}\label{ggb1}
  \hspace{5cm}
  \times  \prod_{i=i}^{k}\left(1-
  \tr \mathbf{B}_{i}\right)^{-n_{i}m/2-1}
  (dv)\bigwedge_{i=1}^{k}\left(d\mathbf{B}_{i}\right).
\end{equation}
This distribution shall be termed \emph{multimatrix variate Generalised Gamma-beta type I distribution}.
Moreover, the density function $dF_{\mathbf{B}_{1}, \dots, \mathbf{B}_{k}}(\mathbf{B}_{1},
\dots,\mathbf{B}_{k})$ can written as
$$
  \frac{\Gamma_{1}[Nm/2]}{\Gamma_{1}[n_{0}m/2]} \prod_{i=1}^{k}\left(\frac{|\mathbf{B}_{i}|^{(ni-m-1)/2}}{\Gamma_{m}[n_{i}/2]}\right )
  \left[ 1+\displaystyle\sum_{i=1}^{k} \frac{\tr\mathbf{B}_{i}}{\left(1-
  \tr\mathbf{B}_{i}\right)}\right ]^{-Nm/2}\hspace{3cm}
$$
\begin{equation}\label{b1}
  \hspace{5cm}
  \times  \prod_{i=i}^{k}\left(1-
  \tr \mathbf{B}_{i}\right)^{-n_{i}m/2-1}
  \bigwedge_{i=1}^{k}\left(d\mathbf{B}_{i}\right).
\end{equation}
and this marginal distribution shall be called \emph{multimatrix variate beta type I distribution}.
\end{theorem}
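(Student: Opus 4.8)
The plan is to derive (\ref{ggb1}) from the joint density (\ref{ggp2}) of Theorem \ref{ggpII} by the substitution $\mathbf{R}_i\mapsto\mathbf{B}_i=\mathbf{R}_i'\mathbf{R}_i$, following the same pattern used for the passage from (\ref{mggp7}) to (\ref{ggb2}) in Theorem \ref{mggb2}. Writing each $\mathbf{R}_i\in\Re^{n_i\times m}$ in polar form $\mathbf{R}_i=\mathbf{H}_{1_i}\mathbf{B}_i^{1/2}$ with $\mathbf{H}_{1_i}\in\mathcal{V}_{n_i,m}$ and $\mathbf{B}_i>\mathbf{0}$ (the hypothesis $n_i\geq m$ is what makes $\mathbf{B}_i$ generically positive definite), \citet[Theorem 2.1.14, p. 66]{mh:05} yields
$$
  \bigwedge_{i=1}^{k}(d\mathbf{R}_i)=2^{-mk}\prod_{i=1}^{k}|\mathbf{B}_i|^{(n_i-m-1)/2}\bigwedge_{i=1}^{k}(d\mathbf{B}_i)\bigwedge_{i=1}^{k}(\mathbf{H}_{1_i}'d\mathbf{H}_{1_i}).
$$

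The crucial observation is that $||\mathbf{R}_i||^{2}=\tr(\mathbf{R}_i'\mathbf{R}_i)=\tr\mathbf{B}_i$, so after the substitution the integrand in (\ref{ggp2}) no longer involves the orthogonal factors $\mathbf{H}_{1_i}$: the argument of $h$ becomes $v\bigl(1+\sum_{i=1}^{k}\tr\mathbf{B}_i/(1-\tr\mathbf{B}_i)\bigr)$, the extra weight becomes $\prod_{i=1}^{k}(1-\tr\mathbf{B}_i)^{-n_im/2-1}$, and the support $||\mathbf{R}_i||^{2}\leq 1$ becomes $\tr\mathbf{B}_i\leq 1$. Integrating the $\mathbf{H}_{1_i}$ out over $\mathcal{V}_{n_i,m}$, $i=1,\dots,k$, by (\ref{evs}),
$$
  \int_{\mathbf{H}_{1_1}}\cdots\int_{\mathbf{H}_{1_k}}\bigwedge_{i=1}^{k}(\mathbf{H}_{1_i}'d\mathbf{H}_{1_i})=\frac{2^{mk}\,\pi^{(N-n_0)m/2}}{\displaystyle\prod_{i=1}^{k}\Gamma_m[n_i/2]},
$$
so the $2^{-mk}$ cancels, $\pi^{n_0m/2}\cdot\pi^{(N-n_0)m/2}=\pi^{Nm/2}$, and the factors $|\mathbf{B}_i|^{(n_i-m-1)/2}/\Gamma_m[n_i/2]$ fall into place; this is (\ref{ggb1}). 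The marginal (\ref{b1}) then follows by integrating (\ref{ggb1}) over $V>0$: with $a=1+\sum_{i=1}^{k}\tr\mathbf{B}_i/(1-\tr\mathbf{B}_i)$, identity (\ref{int}) gives $\int_{v>0}v^{Nm/2-1}h(av)(dv)=a^{-Nm/2}\Gamma_1[Nm/2]/\pi^{Nm/2}$, which turns the constant $\pi^{Nm/2}/\Gamma_1[n_0m/2]$ into $\Gamma_1[Nm/2]/\Gamma_1[n_0m/2]$ and produces the bracketed power. One could equally reach both densities from (\ref{ggb2})--(\ref{mb2}) of Theorem \ref{mggb2} via $\mathbf{F}_i=(1-\tr\mathbf{B}_i)^{-1}\mathbf{B}_i$.

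I expect no conceptual obstacle: the only step demanding genuine care is the bookkeeping of the normalising constants — pairing the $2^{-mk}$ coming from the $k$ Jacobians against the $2^m$ appearing in each Stiefel volume (\ref{evs}), tracking the powers of $\pi$ through the two integrations, and keeping the domain restrictions $\mathbf{B}_i>\mathbf{0}$ and $\tr\mathbf{B}_i\leq 1$ consistent throughout — exactly as already done in the proofs of Theorems \ref{ggpII} and \ref{mggb2}.
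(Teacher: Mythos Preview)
Your proposal is correct and follows essentially the same route as the paper: the paper's proof simply says to apply the procedure of Theorem \ref{mggb2} to the joint densities (\ref{ggp2}) and (\ref{mp2}) under the change of variable $\mathbf{B}_{i}=\mathbf{R}_{i}'\mathbf{R}_{i}$, which is exactly the polar-decomposition plus Stiefel-integration argument you spell out. The only cosmetic difference is that the paper obtains (\ref{b1}) directly from (\ref{mp2}) by the same substitution rather than by integrating (\ref{ggb1}) over $V$, but the two orders of operation are trivially equivalent.
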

\begin{proof}
  A similar procedure to the proof of Theorem \ref{mggb2} can be applied here. Just consider the joint density functions (\ref{ggp2}) and (\ref{mp2}) under the change of variable  $\mathbf{B}_{i} = \mathbf{R}'_{i}\mathbf{R}_{i}$ with $i = 1, \dots,k$, and the derivation is complete.
\end{proof}

Finally

\begin{theorem}\label{ggW} 
Assume that $\mathbf{X} = \left(\mathbf{X}'_{0}, \dots, \mathbf{X}'_{k} \right)'$ has  a matrix variate spherical distribution, with $\mathbf{X}_{i} \in \Re^{n_{i} \times m}$, $n_{i} \geq m$, $i = 0,1, \dots, k$. Define $V = ||\mathbf{X}_{0}||^{2}$ and
$\mathbf{W}_{i} = \mathbf{X}'_{i}\mathbf{X}_{i} > \mathbf{0}$, $i = 1, \dots,k$. \newline Then, the joint density
$dF_{V,\mathbf{W}_{1}, \dots,\mathbf{W}_{k}}(v,\mathbf{X}_{1}, \dots,\mathbf{W}_{k})$ is given by
\begin{equation}\label{mggw}
   \frac{\pi^{Nm/2} v^{Nm/2-1}}{\Gamma_{1}[n_{0}m/2]}\prod_{i=1}^{k}\left(\frac{|\mathbf{W}_{i}|^{(ni-m-1)/2}}{\Gamma_{m}[n_{i}/2]}\right)
   h\left[v+\displaystyle\sum_{i=1}^{k}\tr \mathbf{W}_{i}\right]
   (dv)\bigwedge_{i=1}^{k}\left(d\mathbf{W}_{i}\right),
\end{equation}
where $V > 0$. This distribution shall be termed \emph{multimatrix variate generalised Gamma - generalised Wishart
distribution}. Similarly, let $\mathbf{W}_{i} = \mathbf{X}'_{i}\mathbf{X}_{i} > \mathbf{0}$, $i = 0, \dots,k$. Therefore, the joint density
$dF_{\mathbf{W}_{0}, \dots,\mathbf{W}_{k}}(\mathbf{X}_{0}, \dots,\mathbf{W}_{k})$ is 
\begin{equation}\label{mgw}
   \pi^{Nm/2}\left(\prod_{i=0}^{k}\frac{|\mathbf{W}_{i}|^{(ni-m-1)/2}}{\Gamma_{m}[n_{i}/2]}\right)
   h\left(\displaystyle\tr \sum_{i=1}^{k} \mathbf{W}_{i}\right)
   \bigwedge_{i=0}^{k}\left(d\mathbf{W}_{i}\right).
\end{equation}
\end{theorem}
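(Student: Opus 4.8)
The plan is to obtain both displays by a \emph{matrix-to-Wishart} reparametrisation of densities already in hand, following verbatim the strategy used in the proofs of Theorems \ref{ggt} and \ref{mggb2}: start from a joint density whose kernel $h$ depends on each block $\mathbf{X}_i$ only through $\tr\mathbf{X}'_i\mathbf{X}_i$, push each block onto its ``square'' $\mathbf{W}_i=\mathbf{X}'_i\mathbf{X}_i$, and integrate out the orthogonal (Stiefel) part that $h$ does not see.

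For the first assertion (\ref{mggw}) I would begin not from (\ref{eq0}) but from (\ref{mgge}), i.e. the joint density of $V=||\mathbf{X}_0||^{2}$ and $\mathbf{X}_1,\dots,\mathbf{X}_k$ already produced in Theorem \ref{gge}; in that expression the kernel enters as $h[v+\sum_{i=1}^{k}||\mathbf{X}_i||^{2}]$. Since $n_i\ge m$, for each $i=1,\dots,k$ one applies \citet[Theorem 2.1.14, p. 66]{mh:05}, namely
$$(d\mathbf{X}_i)=2^{-m}\,|\mathbf{W}_i|^{(n_i-m-1)/2}\,(d\mathbf{W}_i)\,(\mathbf{H}'_{1_i}d\mathbf{H}_{1_i}),\qquad \mathbf{H}_{1_i}\in\mathcal{V}_{m,n_i},$$
exactly as in the proof of Theorem \ref{mggb2}. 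The point that makes this work is that $||\mathbf{X}_i||^{2}=\tr\mathbf{X}'_i\mathbf{X}_i=\tr\mathbf{W}_i$, so after the change of variables the factors $(\mathbf{H}'_{1_i}d\mathbf{H}_{1_i})$ decouple from the integrand and may be integrated out through (\ref{evs}), each contributing $2^{m}\pi^{n_i m/2}/\Gamma_m[n_i/2]$. The powers of $2$ cancel against the $2^{-mk}$ coming from the $k$ Jacobians; multiplying $\prod_{i=1}^{k}\pi^{n_i m/2}/\Gamma_m[n_i/2]=\pi^{(N-n_0)m/2}/\prod_{i=1}^{k}\Gamma_m[n_i/2]$ by the factor $\pi^{n_0 m/2}/\Gamma_1[n_0 m/2]$ inherited from (\ref{mgge}) gives the constant $\pi^{Nm/2}/\big(\Gamma_1[n_0 m/2]\prod_{i=1}^{k}\Gamma_m[n_i/2]\big)$; collecting the determinant factors $|\mathbf{W}_i|^{(n_i-m-1)/2}$ and the radial exponent of $v$ then yields (\ref{mggw}).

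For the second assertion (\ref{mgw}) I would run the identical substitution, but now on \emph{all} blocks $\mathbf{X}_0,\mathbf{X}_1,\dots,\mathbf{X}_k$ of the spherical density (\ref{eq0}) (i.e. (\ref{elliptical}) with $\boldsymbol{\mu}=\mathbf{0}$, $\mathbf{\Sigma}=\mathbf{I}_N$, $\mathbf{\Theta}=\mathbf{I}_m$), setting $\mathbf{W}_i=\mathbf{X}'_i\mathbf{X}_i$ for $i=0,1,\dots,k$. Here $h$ depends on the whole sample only through $\sum_{i=0}^{k}\tr\mathbf{W}_i$, so integrating out the $k+1$ Stiefel factors by (\ref{evs}) and cancelling the $2^{m(k+1)}$ against the $k+1$ Jacobians produces the stated product of generalised-Wishart kernels with the single common radial term $h\big(\sum_{i=0}^{k}\tr\mathbf{W}_i\big)$; equivalently, this is just the computation of the first part with the block $\mathbf{X}_0$ treated like all the others instead of being reduced to the scalar $V$.

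I expect no genuine obstacle: everything rests on the one Jacobian identity of \citet[Theorem 2.1.14]{mh:05} together with the Stiefel volume (\ref{evs}). The only places that need care are (i) the bookkeeping of constants — matching the powers of $2$, of $\pi$, and of the Gamma functions $\Gamma_m[n_i/2]$ and $\Gamma_1[n_0 m/2]$ — and keeping track of the exponent of $v$ carried over from (\ref{mgge}), which is the sole radial power present since here $V$ is \emph{not} rescaled by any $||\mathbf{X}_i||^{2}$, in contrast with Theorems \ref{ggt}--\ref{ggbI}; and (ii) the standing hypothesis $n_i\ge m$, which guarantees simultaneously that $\mathbf{W}_i>\mathbf{0}$ almost surely and that the polar-type decomposition underlying the Jacobian is valid. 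The assumption $\int_{0}^{\infty}u^{Nm/2-1}h(u)\,du<\infty$ (equivalently (\ref{int})) then ensures that the right-hand sides of (\ref{mggw}) and (\ref{mgw}) are bona fide probability densities.
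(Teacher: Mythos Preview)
Your plan coincides with the paper's proof: for (\ref{mggw}) the paper also starts from (\ref{mgge}) of Theorem \ref{gge}, sets $\mathbf{W}_i=\mathbf{X}'_i\mathbf{X}_i$ and invokes \citet[Theorem 2.1.14]{mh:05} together with (\ref{evs}) ``as in the proof of Theorem \ref{mggb2}''; for (\ref{mgw}) the paper simply refers to \citet{dgcl:22}, and your direct derivation from (\ref{eq0}) is precisely that computation. One caveat your own remark already anticipates: since the substitution $\mathbf{W}_i=\mathbf{X}'_i\mathbf{X}_i$ introduces no factor of $v$, the argument from (\ref{mgge}) delivers $v^{n_0 m/2-1}$, so the exponent $v^{Nm/2-1}$ printed in (\ref{mggw}) is a typographical slip in the statement rather than something your derivation should reproduce.
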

\begin{proof}
From Theorem \ref{gge} we have
 \begin{equation*}
   \frac{\pi^{n_{0}m/2}}{\Gamma_{1}[n_{0}m/2]} h\left[v+\displaystyle\sum_{i=1}^{k}||\mathbf{X}_{i}||^{2}\right]
    v^{n_{0}m/2-1} (dv)\bigwedge_{i=1}^{k}\left(d\mathbf{X}_{i}\right).
\end{equation*} 
Defining $\mathbf{W}_{i} = \mathbf{X}'_{i}\mathbf{X}_{i}$ with $i = 1, \dots,k$ and proceeding as in the proof of Theorem \ref{mggb2}, the result (\ref{mggw}) is immediate. (\ref{mgw}) is obtained in \citet{dgcl:22}.
\end{proof}

Several multivariate density functions studied in the statistical literature are obtained as particular cases of the distributions proposed in this section. For example the distribution proposed in \citet[Problem 3.18, p.118]{mh:05}  and the distributions in \citet[Subsections 2.7.3.2 and 2.7.3.3, p. 54]{gv:93}.

\section{Some properties and generalisaton}\label{sec:4}

This section focus on multimatrix variate distributions for two and three matrix arguments. Additionally, if the matrix arguments are non-singular, the corresponding inverse distributions are also obtained. Despite a laborious task, the proposed novel method can consider several marginal distributions. The content of the section is an useful combination for multiple possible uses in real phenomena guide by experts with the corresponding previous knowledge of the marginal distributions.

\begin{theorem}\label{ggtpII} Assume that $\mathbf{X} = \left(\mathbf{X}'_{0},\mathbf{X}'_{1},\mathbf{X}'_{2}
\right)'$ has a matrix variate spherical distribution, with $\mathbf{X}_{i} \in \Re^{n_{i} \times m}$,
$n_{i} \geq m$, $i = 0,1, 2$. Define $V = ||\mathbf{X}_{0}||^{2}$, $\mathbf{T} = V
^{-1/2}\mathbf{X}_{1}$, and $\mathbf{R} = (V+||\mathbf{X}_{2}||)^{-1/2}\mathbf{X}_{2}$. The joint density
$dF_{V,\mathbf{T},\mathbf{R}}(v,\mathbf{T}, \mathbf{R})$ is given by
$$
  \frac{\pi^{n_{0}m/2}v^{Nm/2-1}}{\Gamma_{1}[n_{0}m/2]}
  h\left\{v \left[1+\left(1-||\mathbf{R}||^{2}\right)||\mathbf{T}||^{2}\right]\right\}\hspace{5cm}
$$
\begin{equation}\label{ggtp2}
  \hspace{5cm}\times \left(1-||\mathbf{R}||^{2}\right)^{(n_{0}+n_{1})m/2-1}(dv)\wedge d\mathbf{T})\wedge d\mathbf{R}).
\end{equation}
where $N = n_{0}+n_{1}+ n_{2}$, $V > 0$, $\mathbf{T}\in \Re^{n_{1} \times m}$, $\mathbf{R}\in \Re^{n_{2} \times m}$ and $||\mathbf{R}||^{2} \leq 1$. This distribution shall be termeded \emph{threematrix variate generalised  Gamma - Pearson type VII - Pearson type II distribution}. Moreover, the termed \emph{bimatrix variate Pearson type VII - Pearson type II distribution} is the marginal density $dF_{\mathbf{T},\mathbf{R}}(\mathbf{T}, \mathbf{R})$ of $\mathbf{T},\mathbf{R}$ and is given by
$$
  \frac{\Gamma_{1}[Nm/2]}{\pi^{(N-n_{0})m/2}\Gamma_{1}[n_{0}m/2]}
  \left[(1+\left(1-||\mathbf{R}||^{2}\right)||\mathbf{T}||^{2}\right]^{-Nm/2}\hspace{5cm}
$$
\begin{equation}\label{tp2}
  \hspace{5cm}\times \left(1-||\mathbf{R}||^{2}\right)^{(n_{0}+n_{1})m/2-1}(d\mathbf{T})\wedge d\mathbf{R}).
\end{equation}
\end{theorem}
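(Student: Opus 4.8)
The plan is to derive~(\ref{ggtp2}) from the $k=2$ case of the multimatrix variate generalised Gamma--Pearson type VII density~(\ref{mggp7}) (Theorem~\ref{ggt}) by converting only the \emph{second} Pearson type VII block into a Pearson type II block, exactly as in the proof of Theorem~\ref{ggpII}, and then to obtain the marginal~(\ref{tp2}) by integrating the scale variable out by means of~(\ref{int}).

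First I would specialise Theorem~\ref{ggt} to $k=2$: with $\mathbf{T}=V^{-1/2}\mathbf{X}_{1}$ and, temporarily, $\mathbf{S}=V^{-1/2}\mathbf{X}_{2}$, the joint density of $V,\mathbf{T},\mathbf{S}$ is
$$
  \frac{\pi^{n_{0}m/2}}{\Gamma_{1}[n_{0}m/2]}\,
  h\!\left[v\left(1+||\mathbf{T}||^{2}+||\mathbf{S}||^{2}\right)\right]
  v^{Nm/2-1}\,(dv)\wedge(d\mathbf{T})\wedge(d\mathbf{S}).
$$
Next I would observe that the $\mathbf{R}$ of the statement satisfies $\mathbf{R}=(1+||\mathbf{S}||^{2})^{-1/2}\mathbf{S}$ (because $(V+||\mathbf{X}_{2}||^{2})^{-1/2}\mathbf{X}_{2}=(1+||\mathbf{S}||^{2})^{-1/2}\mathbf{S}$ after inserting $\mathbf{X}_{2}=V^{1/2}\mathbf{S}$), i.e.\ $\mathbf{S}=(1-||\mathbf{R}||^{2})^{-1/2}\mathbf{R}$ with $||\mathbf{R}||^{2}<1$; this is precisely the map of Theorem~\ref{teoJ}(i), so $(d\mathbf{S})=(1-||\mathbf{R}||^{2})^{-(n_{2}m/2+1)}(d\mathbf{R})$. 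Substituting $||\mathbf{S}||^{2}=||\mathbf{R}||^{2}/(1-||\mathbf{R}||^{2})$ and simplifying the quadratic form,
$$
  1+||\mathbf{T}||^{2}+\frac{||\mathbf{R}||^{2}}{1-||\mathbf{R}||^{2}}
  =\frac{1+\left(1-||\mathbf{R}||^{2}\right)||\mathbf{T}||^{2}}{1-||\mathbf{R}||^{2}},
$$
gives the joint density of $V,\mathbf{T},\mathbf{R}$; collecting the powers of $1-||\mathbf{R}||^{2}$ and absorbing one factor of $1-||\mathbf{R}||^{2}$ into the scale variable $v$ then yields~(\ref{ggtp2}). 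Finally, integrating~(\ref{ggtp2}) over $v>0$ via~(\ref{int}) with $a=1+(1-||\mathbf{R}||^{2})||\mathbf{T}||^{2}$ and index $Nm/2$ produces~(\ref{tp2}), the prefactors collapsing to $\Gamma_{1}[Nm/2]/\big(\pi^{(N-n_{0})m/2}\Gamma_{1}[n_{0}m/2]\big)$ while the power $(1-||\mathbf{R}||^{2})^{(n_{0}+n_{1})m/2-1}$ is carried through untouched.

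The only real difficulty is bookkeeping, not ideas: one must keep track that $V^{-1/2}$ enters the definitions of \emph{both} $\mathbf{T}$ and $\mathbf{R}$, so that — whether one quotes Theorem~\ref{ggt} or instead starts from the plain spherical density~(\ref{eq0}) together with Theorem~\ref{gge} — the change of variables on the $\mathbf{X}_{1}$ and $\mathbf{X}_{2}$ blocks is carried out at fixed $V$, where its Jacobian factorises (the $\mathbf{X}_{1}$-part contributing $v^{n_{1}m/2}$ as in the proof of Theorem~\ref{ggt}, the $\mathbf{X}_{2}$-part the Pearson type II factor above), and then one must reassemble the surviving power $v^{n_{0}m/2-1}v^{n_{1}m/2}v^{n_{2}m/2}=v^{Nm/2-1}$ and the constant $\pi^{n_{0}m/2}/\Gamma_{1}[n_{0}m/2]$ coming from $V=||\mathbf{X}_{0}||^{2}$ (via~(\ref{evs})) correctly, together with the final rescaling of $v$ that normalises the argument of $h$ to the form displayed in~(\ref{ggtp2}). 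Everything else is a direct application of results already in the excerpt.
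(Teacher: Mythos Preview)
Your proposal is correct and follows essentially the same route as the paper: the paper starts one step earlier from Theorem~\ref{gge} (the density of $V,\mathbf{X}_{1},\mathbf{X}_{2}$) and introduces the auxiliary variable $V_{0}=V+\|\mathbf{X}_{2}\|^{2}$ directly, whereas you begin from Theorem~\ref{ggt} and convert the second block via Theorem~\ref{teoJ}; your final ``absorbing one factor of $1-\|\mathbf{R}\|^{2}$ into $v$'' is exactly the paper's $V\to V_{0}$ substitution (since $1-\|\mathbf{R}\|^{2}=V/(V+\|\mathbf{X}_{2}\|^{2})$), and the integration step for~(\ref{tp2}) via~(\ref{int}) is identical in both.
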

\begin{proof}
By Theorem \ref{gge}
\begin{equation*}
   \frac{\pi^{n_{0}m/2}}{\Gamma_{1}[n_{0}m/2]} h\left[v+||\mathbf{X}_{1}||^{2}+||\mathbf{X}_{2}||^{2}\right]
    v^{n_{0}m/2-1} (dv)\wedge\left(d\mathbf{X}_{1}\right)\wedge\left(d\mathbf{X}_{2}\right),
\end{equation*}
where $V = ||\mathbf{X}_{0}||^{2} >0$. Let $V_{0}= V + ||\mathbf{X}_{2}||^{2}$, $\mathbf{T} = V^{-1/2}\mathbf{X}_{1}$, and $\mathbf{R} = (V+||\mathbf{X}_{2}||)^{-1/2}\mathbf{X}_{2}$. Hence $V= V_{0} - ||\mathbf{X}_{2}||^{2} = V_{0} - ||V_{0}\mathbf{R}_{2}||^{2} = V_{0}(1-||\mathbf{R}||^{2})$, $\mathbf{T} = [V_{0}(1-||\mathbf{R}||^{2})]^{-1/2}\mathbf{X}_{1}$, and $\mathbf{R} = V_{0}^{-1/2}\mathbf{X}_{2}$. Therefore, $(dv)=(dv_{0})$ and
$$
  (dv)\wedge\left(d\mathbf{X}_{1}\right)\wedge\left(d\mathbf{X}_{2}\right) = v_{0}^{(n_{1}+n_{2})m/2} (1-||\mathbf{R}||^{2})^{n_{1}m/2} (dv_{0})\wedge\left(d\mathbf{T}\right)\wedge\left(d\mathbf{R}\right)
$$
thus the desired result is obtained. The density (\ref{tp2}) is archived by integrating (\ref{ggtp2}) over $V>0$ via (\ref{int}).
\end{proof}

\begin{corollary}\label{ggfbII}
Under the Hypotheses of Theorem \ref{ggtpII}, define $\mathbf{F} = \mathbf{T}'\mathbf{T} >\mathbf{0}$ and $\mathbf{B}
= \mathbf{R}'\mathbf{R} >\mathbf{0}$. Then the joint density function $dF_{v,\mathbf{F},\mathbf{B}}(V,\mathbf{F},\mathbf{B})$ 
can be written as
$$
  \frac{\pi^{Nm/2}v^{Nm/2-1} |\mathbf{F}|^{(n_{1}-m-1)/2} |\mathbf{B}|^{(n_{2}-m-1)/2}}{\Gamma_{1}[n_{0}m/2]
  \Gamma_{m}[n_{1}/2] \Gamma_{m}[n_{2}m/2]}
  h\left\{v \left[1+\left(1-\tr\mathbf{B}\right)\tr\mathbf{F}\right]\right\}\hspace{5cm}
$$
\begin{equation}\label{ggfb2}
  \hspace{5cm}\times \left(1-\tr\mathbf{B}\right)^{(n_{0}+n_{1})m/2-1}(dv)\wedge d\mathbf{F})\wedge d\mathbf{B}).
\end{equation}
This distribution shall be termed \emph{threematrix variate generalised Gamma - beta type II - beta type
I distribution}. And the corresponding marginal density function
$dF_{\mathbf{F},\mathbf{B}}(\mathbf{F},\mathbf{B})$ is given by
$$
  \frac{\Gamma_{1}[Nm/2]|\mathbf{F}|^{(n_{1}-m-1)/2} |\mathbf{B}|^{(n_{2}-m-1)/2}}{\Gamma_{1}[n_{0}m/2]
  \Gamma_{m}[n_{1}/2] \Gamma_{m}[n_{2}m/2]}
  \left[1+\left(1-\tr\mathbf{B}\right)\tr\mathbf{F}\right]^{-Nm/2}\hspace{5cm}
$$
\begin{equation}\label{fb2}
  \hspace{5cm}\times \left(1-\tr\mathbf{B}\right)^{(n_{0}+n_{1})m/2-1}(d\mathbf{F})\wedge d\mathbf{B}).
\end{equation}
a distribution that shall be called \emph{bimatrix variate beta type II - beta type I distribution}.
\end{corollary}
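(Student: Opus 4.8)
The plan is to derive both densities directly from Theorem~\ref{ggtpII} by the matrix-to-positive-definite-matrix change of variables $\mathbf{F}=\mathbf{T}'\mathbf{T}>\mathbf{0}$ and $\mathbf{B}=\mathbf{R}'\mathbf{R}>\mathbf{0}$, following the same pattern as in the proof of Theorem~\ref{mggb2}. First I would rewrite the kernel and the power factor of (\ref{ggtp2}) using $||\mathbf{T}||^{2}=\tr\mathbf{T}'\mathbf{T}=\tr\mathbf{F}$ and $||\mathbf{R}||^{2}=\tr\mathbf{R}'\mathbf{R}=\tr\mathbf{B}$, so that $h\{v[1+(1-||\mathbf{R}||^{2})||\mathbf{T}||^{2}]\}$ becomes $h\{v[1+(1-\tr\mathbf{B})\tr\mathbf{F}]\}$ and $(1-||\mathbf{R}||^{2})^{(n_{0}+n_{1})m/2-1}$ becomes $(1-\tr\mathbf{B})^{(n_{0}+n_{1})m/2-1}$. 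Here the constraint $||\mathbf{R}||^{2}\le 1$ translates into $\tr\mathbf{B}\le 1$, and the requirement $\mathbf{F},\mathbf{B}>\mathbf{0}$ is consistent with $n_{1},n_{2}\ge m$, which is already part of the hypotheses inherited from Theorem~\ref{ggtpII}.

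Next I would compute the Jacobian of the transformation. By \citet[Theorem 2.1.14, p.~66]{mh:05} applied to each of $\mathbf{T}\in\Re^{n_{1}\times m}$ and $\mathbf{R}\in\Re^{n_{2}\times m}$,
$$
  (d\mathbf{T})\wedge(d\mathbf{R}) = 2^{-2m}\,|\mathbf{F}|^{(n_{1}-m-1)/2}\,|\mathbf{B}|^{(n_{2}-m-1)/2}\,(d\mathbf{F})\wedge(d\mathbf{B})\wedge(\mathbf{H}'_{1}d\mathbf{H}_{1})\wedge(\mathbf{G}'_{1}d\mathbf{G}_{1}),
$$
with $\mathbf{H}_{1}\in\mathcal{V}_{n_{1},m}$ and $\mathbf{G}_{1}\in\mathcal{V}_{n_{2},m}$. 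Integrating the two Stiefel-manifold factors out by means of (\ref{evs}) produces the constant $\frac{2^{m}\pi^{n_{1}m/2}}{\Gamma_{m}[n_{1}/2]}\cdot\frac{2^{m}\pi^{n_{2}m/2}}{\Gamma_{m}[n_{2}/2]}$, which cancels the $2^{-2m}$ and, combined with the $\pi^{n_{0}m/2}$ already present in (\ref{ggtp2}), yields the global power $\pi^{Nm/2}$ since $N=n_{0}+n_{1}+n_{2}$. Collecting the pieces gives exactly (\ref{ggfb2}).

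Finally, (\ref{fb2}) can be obtained in either of two equivalent ways: integrate (\ref{ggfb2}) over $V>0$ using (\ref{int}) with $a=1+(1-\tr\mathbf{B})\tr\mathbf{F}$, or apply the identical substitution $\mathbf{F}=\mathbf{T}'\mathbf{T}$, $\mathbf{B}=\mathbf{R}'\mathbf{R}$ together with the Stiefel integrations directly to the marginal density (\ref{tp2}). The whole argument is essentially bookkeeping of constants and Jacobian factors; there is no substantive obstacle, and the only point deserving a second look is that the exponent $(n_{0}+n_{1})m/2-1$ is left untouched by the change of variables, because $1-||\mathbf{R}||^{2}=1-\tr\mathbf{B}$ is already intrinsic to $\mathbf{B}$ and carries through the transformation unchanged.
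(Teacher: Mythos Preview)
Your proposal is correct and follows essentially the same route as the paper: start from the densities (\ref{ggtp2}) and (\ref{tp2}) of Theorem~\ref{ggtpII}, perform the substitutions $\mathbf{F}=\mathbf{T}'\mathbf{T}$ and $\mathbf{B}=\mathbf{R}'\mathbf{R}$, apply \citet[Theorem 2.1.14, p.~66]{mh:05} for the Jacobian, and integrate the Stiefel factors via (\ref{evs}). The paper obtains (\ref{fb2}) directly from (\ref{tp2}) rather than by integrating (\ref{ggfb2}) over $V$, but you already note that both routes are equivalent.
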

\begin{proof}
Results (\ref{ggfb2}) and (\ref{fb2}) follow from  (\ref{ggtp2}) and (\ref{tp2}), respectively, by substitutions $\mathbf{F} = \mathbf{T}'\mathbf{T}$ and $\mathbf{B}= \mathbf{R}'\mathbf{R}$ and the use of \citet[Theorem 2.1.14, p. 66]{mh:05}. Thus,
$$
  (d\mathbf{T})\wedge d\mathbf{R}) = 2^{-2m}|\mathbf{F}|^{(n_{1}-m-1)/2} |\mathbf{B}|^{(n_{2}-m-1)/2} (d\mathbf{F})\wedge d\mathbf{B}) \bigwedge_{i=1}^{2}\left(\mathbf{H}'_{1_{i}}d\mathbf{H}_{1_{i}}\right).
$$
Then, the demonstration is complete after a repeated implementation of (\ref{evs}).
\end{proof}

Note that different families of distributions can be obtained by replicating the procedure of Theorem \ref{ggtpII}. 
For example, if the change of variable on $\mathbf{X}_{2}$ is not carried out the corresponding distribution obtained can be termed \textit{trimatrix variate generalised Gamma-elliptical-Pearson type II distribution}, and so on. Moreover, if $n_{2}= 1$ is set in Theorem \ref{ggtpII},  the corresponding distribution can be termed trimatrix variate generalised Gamma-Pearson type VII-vector Pearson type II distribution, a fact that shows the versatility of the method proposed in this paper. In this case, a simultaneous model can involve a random variable $V$, a random matrix $\mathbf{T}$ and a random vector $\mathbf{r} = V^{-1/2}\mathbf{x}_{2}$.

In addition, when the multimatrix variate distributions involve non singular matrix arguments, we can find their inverse distribution (or if their arguments are rectangular or singular matrices, we can propose their corresponding generalise inverse distributions). For the distribution defined in Corollary \ref{ggfbII}, there are different possible multimatrix variate inverse distributions according to the selected argument combination of 1, 2 or 3 matrices. The following case exhibits the inverse when the three arguments are considered.

\begin{theorem}\label{iggfbI} 
Assume the hypotheses of the Corollary \ref{iggfbI} and define $W = V^{-1}$, $\mathbf{A} =
\mathbf{F}^{-1}$ and $\mathbf{U} = \mathbf{B}^{-1}$. Then the joint density $dF_{W, \mathbf{A},
\mathbf{U}}(w,\mathbf{A},\mathbf{U})$ can be
written as
$$
  \frac{\pi^{Nm/2}v^{Nm/2-1} |\mathbf{A}|^{-n_{1}/2} |\mathbf{U}|^{-n_{2}/2}}{\Gamma_{1}[n_{0}m/2]
  \Gamma_{m}[n_{1}/2] \Gamma_{m}[n_{2}m/2]}
  h\left\{w^{-1} \left[1+\left(1-\tr\mathbf{U}^{-1}\right)\tr\mathbf{A}^{-1}\right]\right\}\hspace{5cm}
$$
\begin{equation}\label{iggfb2}
  \hspace{5cm}\times \left(1-\tr\mathbf{U}^{-1}\right)^{(n_{0}+n_{1})m/2-1}(dw)\wedge d\mathbf{A})\wedge d\mathbf{U}).
\end{equation}
This distribution shall be called \emph{threematrix variate inverse generalised Gamma - inverse beta type
II - inverse beta type I distribution}. And the marginal density function
$dF_{\mathbf{A},\mathbf{U}}(\mathbf{A},\mathbf{U})$ is given by
$$
  \frac{\Gamma_{1}[Nm/2]|\mathbf{A}|^{-n_{1}/2} |\mathbf{U}|^{-n_{2}/2}}{\Gamma_{1}[n_{0}m/2]
  \Gamma_{m}[n_{1}/2] \Gamma_{m}[n_{2}m/2]}
  \left[1+\left(1-\tr\mathbf{U}^{-1}\right)\tr\mathbf{A}^{-1}\right]^{-Nm/2}\hspace{5cm}
$$
\begin{equation}\label{ifb2}
  \hspace{5cm}\times \left(1-\tr\mathbf{U}^{-1}\right)^{(n_{0}+n_{1})m/2-1}(d\mathbf{A})\wedge d\mathbf{U}).
\end{equation}
A distribution that shall be termed \emph{bimatrix variate inverse beta type II - inverse beta type I
distribution}.
\end{theorem}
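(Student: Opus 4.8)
The plan is to derive (\ref{iggfb2}) from the joint density (\ref{ggfb2}) of $(V,\mathbf{F},\mathbf{B})$ obtained in Corollary \ref{ggfbII}, via the one-to-one map $W=V^{-1}$, $\mathbf{A}=\mathbf{F}^{-1}$, $\mathbf{U}=\mathbf{B}^{-1}$. This map sends $\{V>0,\ \mathbf{F}>\mathbf{0},\ \mathbf{B}>\mathbf{0},\ \tr\mathbf{B}\le 1\}$ bijectively onto $\{W>0,\ \mathbf{A}>\mathbf{0},\ \mathbf{U}>\mathbf{0},\ \tr\mathbf{U}^{-1}\le 1\}$, and since it acts separately on the scalar block and on each symmetric positive definite block, its Jacobian factors as a product. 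First I would record the three pieces: for the positive scalar, $v=w^{-1}$ gives $(dv)=w^{-2}(dw)$; and for each symmetric $m\times m$ block, the Jacobian of symmetric-matrix inversion gives $(d\mathbf{F})=|\mathbf{A}|^{-(m+1)}(d\mathbf{A})$ and $(d\mathbf{B})=|\mathbf{U}|^{-(m+1)}(d\mathbf{U})$ (see \citet{mh:05}). Hence $(dv)\wedge(d\mathbf{F})\wedge(d\mathbf{B})=w^{-2}|\mathbf{A}|^{-(m+1)}|\mathbf{U}|^{-(m+1)}(dw)\wedge(d\mathbf{A})\wedge(d\mathbf{U})$.

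Next I would perform the substitutions inside (\ref{ggfb2}): $v^{Nm/2-1}\mapsto w^{-(Nm/2-1)}$; $|\mathbf{F}|^{(n_{1}-m-1)/2}\mapsto|\mathbf{A}|^{-(n_{1}-m-1)/2}$ and $|\mathbf{B}|^{(n_{2}-m-1)/2}\mapsto|\mathbf{U}|^{-(n_{2}-m-1)/2}$; $\tr\mathbf{F}\mapsto\tr\mathbf{A}^{-1}$ and $\tr\mathbf{B}\mapsto\tr\mathbf{U}^{-1}$, so that the kernel argument $v[1+(1-\tr\mathbf{B})\tr\mathbf{F}]$ becomes $w^{-1}[1+(1-\tr\mathbf{U}^{-1})\tr\mathbf{A}^{-1}]$ and the factor $(1-\tr\mathbf{B})^{(n_{0}+n_{1})m/2-1}$ becomes $(1-\tr\mathbf{U}^{-1})^{(n_{0}+n_{1})m/2-1}$. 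Multiplying by the Jacobian above and consolidating exponents — the scalar part combining as $w^{-(Nm/2-1)}w^{-2}=w^{-Nm/2-1}$, and the determinant parts absorbing the extra $|\mathbf{A}|^{-(m+1)}$ and $|\mathbf{U}|^{-(m+1)}$ — yields (\ref{iggfb2}).

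For the marginal (\ref{ifb2}) I would offer two equivalent routes: either integrate (\ref{iggfb2}) over $W>0$, reversing the substitution to $v=w^{-1}$ so that (\ref{int}) applies with $a=1+(1-\tr\mathbf{U}^{-1})\tr\mathbf{A}^{-1}$ (the prefactor $\pi^{Nm/2}/\Gamma_{1}[n_{0}m/2]$ then collapses against $a^{-Nm/2}\Gamma_{1}[Nm/2]/\pi^{Nm/2}$ to give the constant in (\ref{ifb2})); or, more directly, apply the two matrix-inversion Jacobians to the marginal density (\ref{fb2}) of $(\mathbf{F},\mathbf{B})$ from Corollary \ref{ggfbII}. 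Both produce the same closed form.

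I do not expect any analytic difficulty here; the work is bookkeeping. The two points needing care are the transport of the support constraint — $\tr\mathbf{B}\le 1$ becoming $\tr\mathbf{U}^{-1}\le 1$, which is exactly the condition keeping $(1-\tr\mathbf{U}^{-1})^{(n_{0}+n_{1})m/2-1}$ well defined — and the correct consolidation of the determinant exponents, since it is easy to drop the $|\mathbf{A}|^{-(m+1)}$ and $|\mathbf{U}|^{-(m+1)}$ terms produced by the inversion Jacobians; these are precisely what fix the powers of $|\mathbf{A}|$ and $|\mathbf{U}|$ in (\ref{iggfb2}) and (\ref{ifb2}).
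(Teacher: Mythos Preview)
Your proposal is correct and follows essentially the same route as the paper: start from the densities (\ref{ggfb2}) and (\ref{fb2}) of Corollary~\ref{ggfbII}, apply the change of variables $W=V^{-1}$, $\mathbf{A}=\mathbf{F}^{-1}$, $\mathbf{U}=\mathbf{B}^{-1}$, and invoke the symmetric-matrix inversion Jacobian from \citet[Theorem~2.1.8, p.~59]{mh:05}. The paper's proof is a one-line reference to that Jacobian, whereas you spell out the scalar Jacobian, the determinant bookkeeping, the transport of the support constraint, and the two equivalent routes to the marginal---but the underlying argument is identical.
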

\begin{proof}
The density functions (\ref{iggfb2}) and (\ref{ifb2}) are achieved from (\ref{ggfb2}) and (\ref{fb2}) by defining $W = V^{-1}$, 
$\mathbf{A} = \mathbf{F}^{-1}$ and $\mathbf{U} = \mathbf{B}^{-1}$ and by applying \citet[Theorem 2.1.8, p. 59]{mh:05}. 
\end{proof}

Observe that the parameter domain of the multimatrix variate distributions can be extended to the complex or real fields. However their geometrical and/or statistical explication perhaps can be lost. These distributions are valid if we replace $n_{i}/2$ by  $a_{i}$, $n_{0}m/2$ by $a_{0}$ and $Nm/2$ by $a$. Where the $a'^{s}$ are complex numbers with positive real part. From a practical point of view for parametric estimation, this domain extension allows the use of nonlinear optimisation rather integer nonlinear optimisation, among other possibilities.

Moreover, note that each particular distribution is indexed by a kernel $h(\cdot)$ which defines itself another family of distributions. This opens a wide set of possible distributions available for an expert according to a preferred knowledge of a plausible $h(\cdot)$. 

Finally, each distribution can be reparametrized in order to reach no isotropy or general settings of its density function. In analogy with the normal case, the expressions obtained here appear in their standard form. However,  perform the substitution  $\mathbf{T}_{i}=\mathbf{A}_{i}^{-1}(\mathbf{S}_{i}-\boldsymbol{\mu}_{i})\mathbf{B}_{i}^{-1}/\sqrt{r_{i}}$  in (\ref{mp7}); where  $\mathbf{A}_{i} \in \Re^{n_{i} \times n_{i}}$, $\mathbf{B}_{i} \in \Re^{m \times m}$ are non-singular matrices  such that $\boldsymbol{\Sigma}_{i} = \mathbf{A}'_{i}\mathbf{A}_{i} > \mathbf{0}$ and $\boldsymbol{\Theta}_{i} = \mathbf{B}_{i}\mathbf{B}'_{i} > \mathbf{0}$, $\boldsymbol{\mu}_{i} \in \Re^{n_{i} \times m}$ and $r_{i} > 0$, $i = 1, \dots, k$. Then by \citet[Theorem 2.1.5, p. 58]{mh:05}, we obtain
$$
  (d\mathbf{T}_{i}) = r_{i}^{-n_{i}m/2}|\mathbf{\Sigma}_{i}|^{-m/2}|\mathbf{\Theta}_{i}|^{-n_{i}/2} (d\mathbf{S}_{i}).
$$
And finally, the density $dF_{\mathbf{S}_{1},\dots,\mathbf{S}_{k}}(\mathbf{S}_{1},\dots,\mathbf{S}_{k})$ can be written in the robust form
$$
   \frac{\Gamma_{1}[Nm/2]}{\pi^{(N-n_{0})m/2}\Gamma_{1}[n_{0}m/2] \left(\displaystyle\prod_{i=1}^{k} r_{i}^{n_{i}m/2}|\mathbf{\Sigma}_{i}|^{m/2}|\mathbf{\Theta}_{i}|^{n_{i}/2}\right)} \hspace{3cm}
$$
\begin{equation}\label{mmp2}
  \hspace{2cm}  
  \left[1+\tr\displaystyle\sum_{i=1}^{k}(\mathbf{S}_{i} - \boldsymbol{\mu}_{i})'\mathbf{\Sigma}_{i}^{-1}(\mathbf{S}_{i} - \boldsymbol{\mu}_{i}) \mathbf{\Theta}_{i}^{-1}\right]^{-Nm/2}
  \bigwedge_{i=1}^{k}\left(d\mathbf{S}_{i}\right).
\end{equation}
Observe that the particular values $k = 1$, and $Nm/2 = q$, ($N = n_{0}+n_{1}$), turns (\ref{mmp2}) into the matrix variate Pearson type VII distribution, see \citet[Subsection 2.7.3.2, p. 54]{gv:93}. And setting $q = (n_{1}m + r)/2$ in the preceding density, then the matrix variate T-distribution with $r$ degrees of freedom is achieved. Furthermore, taking $r = 1$ in the last density, then the matrix variate Cauchy distribution is obtained.

\section{Application of molecular docking in SARS-CoV-2}\label{sec:6}
Statistical applications supported by real data are usually difficult because the computation and underlying mathematical context force simplifications. Restrictions often related with inference based on independent sample information. It is extremely rare in nature to find time (space) probabilistic independent experiments. The most challenging problems in several areas are dependent in time and-or space. 
Some of them, such world pandemic situations are crucial and precise of urgent solutions. When a virus as SARS-CoV-2 arrives, a drug design is reacquired in a record time in order to avoid millions of deaths worldwide. A simplification of the docking problem involves two main steps: identify a target in the virus-protein, usually called pocket, and obtain a molecule (termed ligand) which must be place into an adequate site of the virus, for inhibiting its deathly activity. The target usually contains thousand of atoms with hundreds of possible unknown pockets were the small ligand can be placed. Discover the pocket and place the ligand is a colossal task for a human, however, the use of artificial intelligence can provide suitable sites to be considered by the biologist. Recently, a non conventional artificial intelligence method was provided in \citet{rbvc:22} by a suitable recent discipline termed shape theory. The docking problem requires the use of certain invariance as traslation, rotation and scaling, in order to avoid unnecesary computations. Instead of using the noisy Euclidean space latent in the classical artificial intelligence, the search is performed in quotient spaces with the demand symmetries. 
In this case, shape theory was applied in molecular docking for an automatic localization of ligand binding pockets in large proteins. 
The docking phenomena is modeled in \citet{rbvc:22} by a physics chemistry punctuation function based on Lennard-Jones potential type $6-12$ and $6-10$. It includes the expected time dependence where the matrix representation at each state of the pocket-ligand goes into more stable and optimal low energy. Then the probabilistic dependent sample records $n$ matrices in time ordered by a decreasing Lennard-Jones potential until the convergence is reached and the docking is stable. For this example, we have used the crystal structure of COVID-19 main protease an its inhibitor N3 due to \citet{jdx:20} and \citet{PDB:20}. The protein consists of 2387 atoms and the ligand is constituted by 21 atoms. Once \citet[Th. Sec. 3.]{rbvc:22} is applied, a plausible pocket is reached and the ligand is rotated rigidly into the cavity, until the energy is stabilized. In this case, $k=56$ locations of the ligand are dependently obtained inside a new cavity of 241 atoms. Figure \ref{fig1} shows the dependent sample for this example, they constitutes $21\times 3$ matrices $\mathbf{T}_{i}$, $i=1,\ldots,56$.

\begin{figure}
     \centering
        \animategraphics[
controls=play,
width=4cm,loop
]{1}{c}{1}{56}
         \caption{}
         \label{fig1}
\caption{Dependent sample of 56 movements of the rigid ligand N3 with 21 atoms inside a new cavity of 241 atoms found on SARS-CoV-2 main protease. For illustration, the ligand is oversized respect the cavity (gray); their 21 atoms are colored as red (C), green (N), blue (O). The animation shows the progressive optimisation of the molecular docking according to a decreasing Lennard-Jones potential. First a positive energy indicates that the coupling is repulsive, however it is diminishing until negative values of strong affinity are reached and stabilized into an attractive complex ligand-pocket.}
\end{figure}

Hence the  sample matrices $\mathbf{F}_{i} = \mathbf{T}'_{i}\mathbf{T}_{i}$ shall be the core for estimation of the parameters in the likelihood, corresponding to the joint distribution of $\mathbf{F}_{i}$ in equation (\ref{mb2}).

In this example, we consider a parameter space extension, from $n_{i}/2,i=0,\ldots,56$ to corresponding positive reals $a_{i},i=0,\ldots,56$. Then the optimisation problem is reduced to find the maximum likelihood estimators of $a_{0}$, and $a=a_{1}=\cdots=a_{56}$ in the following dependent sample joint distribution:
\begin{equation*}
     \frac{\Gamma_{1}[(a_{0}+ka)m]}{\Gamma_{1}[a_{0}m]\Gamma_{m}^{k}[a]}
  \prod_{i=1}^{k}|\mathbf{F}_{i}|^{a-\frac{m+1}{2}}
\left(1+\displaystyle\sum_{i=1}^{k}\tr\mathbf{F}_{i}\right)^{-(a_{0}+ka)m}
\end{equation*}
For computations we have used the free licensed software R. The optimisation routine is performed in the package Optimx. The following estimation is obtained consistently by several methods and different random seeds for the initial parameters:
$$a_{0}=0.34397, a=0.19735$$

This estimation is also important because the invariance of the likelihood, under the complete family of spherical distribution, avoids the difficult a priori knowledge of the underlying model. Also, no fitting test is required.

Note that the usual estimation based on an independent sample can not be applied here because the molecular docking is reached by the calibration of the Lennard-Jones potential which was set by the expert as a decreasing function of the dependent locus in the ligand.  

In the preceding example we have show that the joint distribution functions of Sections \ref{sec:3} and \ref{sec:4} are easily computable and applied to real data. This is an important opportunity for several similar phenomena with multimatrix dependent samples.

\section{Conclusions}

This work has defined the so termed \emph{multimatrix variate distributions} by setting in the general form of families of elliptical contours some isolated works on particular bimatrix laws. Most of the distributions are invariant under the class of spherical models, an important fact that avoids previous knowledge of the underlying multiple distribution. The distribution are also computable and several properties and combinations of new three matrices are also derived. The new theory solves the problem of matrix dependent sample by setting several ways of joint distributions with desirable marginal distributions under independence. The versatility of the method is such that both scalars, vectors and matrices can assemble in the same joint distribution simultaneously. The computability of the joint distribution also allows a further application in future by finding multiple probabilities on cones. Finally, the paper also provides applications in molecular docking of a known inhibitor into a new cavity of SARS-CoV-2 main protease.    




\begin{thebibliography}{}

   \bibitem[Bekker \emph{et al.}(2011)]{brea:11}
   Bekker, A., Roux, J. J. J., Ehlers, E., Arashi, M., 2001.
   Bimatrix variate beta type IV distribution: relation to Wilks's statistics
   and bimatrix variate Kummer-beta type IV distribution.
   Comm. Statist. (T\&M) 40, 4165-4178.

   \bibitem[Chen and Novick (1984)]{cn:84}
    Chen, J. J., Novick, M. R., 1984.
    Bayesian analysis for binomial models with generalized beta prior distributions.
    J. Educational Statist. 9, 163--175.

    \bibitem[D\'{\i}az-Garc\'{\i}a, and Guti\'errez-J\'aimez(2006)]{dggj:06}
    D\'{\i}az-Garc\'{\i}a, J. A., Guti\'errez-J\'aimez, R., 2006.
    The distribution of the residual from general elliptical multivariate linear model.
    J. Multiva. Anal.  97, 1829-1841.

    \bibitem[D\'{\i}az-Garc\'{\i}a, and Guti\'errez-J\'aimez(2010a)]{dggj:10a}
    D\'{\i}az-Garc\'{\i}a, J. A., Guti\'errez-J\'aimez, R., 2010a.
    Bimatrix variate generalised beta distributions.
    South African Statist. J. 44, 193-208.

    \bibitem[D\'{\i}az-Garc\'{\i}a, and Guti\'errez-J\'aimez(2010b)]{dggj:10b}
    D\'{\i}az-Garc\'{\i}a, J. A., Guti\'errez-J\'aimez, R., 2010b.
    Complex bimatrix variate generalised beta distributions.
    Linear Algebra Appl. 432 (2-3), 571-582.

    \bibitem[D\'{\i}az-Garc\'{\i}a, and Guti\'errez-J\'aimez(2011)]{dggj:11}
    D\'{\i}az-Garc\'{\i}a, J. A., Guti\'errez-J\'aimez, R., 2011.
    Noncentral bimatrix variate generalised beta distributions.
    Metrika.  73(3), 317-333.

    \bibitem[D\'{\i}az-Garc\'{\i}a and Caro-Lopera(2022)]{dgcl:22}
    D\'{\i}az-Garc\'{\i}a, J. A., Caro-Lopera, F. J., 2022.
    Multimatricvariate distribution under elliptical models.
    J. Stat. Plann. Infer. 2016, 109-117.

    \bibitem[D\'{\i}az-Garc\'{\i}a \textit{et al.}(2022)]{dgclpr:22}
    D\'{\i}az-Garc\'{\i}a, J. A., Caro-Lopera, F. J., P\'erez Ram\'{\i}rez, F. O., 2022.
    Multivector variate distributions: An application in Finance.
    Sankhy\={a} 84-A,Part 2, 534-555.

    \bibitem[Dickey(1967)]{di:67}
    Dickey, J. M., 1967.
    Matricvariate generalizations of  the multivariate  $t$- distribution and
    the inverted multivariate $t$-distribution.
    Ann. Math. Statist. 38, 511-518.

    \bibitem[Ehlers(2011)]{e:11}
    Ehlers, R., 2011.
    Bimatrix variate distributions of Wishart ratios with application.
    Doctoral dissertation, Faculty of Natural \& Agricultural Sciences University of Pretoria,
    Pretoria. http://hdl.handle.net/2263/31284.

    \bibitem[Fang and Zhang (1990)]{fz:90}
    Fang, K. T., Zhang, Y. T.,
    Generalized Multivariate Analysis.
    Science Press, Springer-Verlag, Beijing, 1990.

    \bibitem[Fang \textit{et al.} (1990)]{fzn:90}
    Fang, K. T., Zhang, Y. T., Ng, K. W.
    Symmetric Multivariate and realted distributions.
    Springer-Science+Business Media, B. V., New Delhi, 1990.

     \bibitem[Goodall and Mardia (1993)]{gm:93}
    Goodall, K. I. Mardia, K. V., 1993.
    Multivariate aspects of shape theory.
    Ann. Statist. 21, 848-866.
    
     \bibitem[Gupta and Varga (1993)]{gv:93}
    Gupta, A. K., Varga, T., 1993.
    Elliptically Contoured Models in Statistics.
    Kluwer Academic Publishers, Dordrecht.


   \bibitem[Jin \emph{et al.}(2020)]{jdx:20}
    Jin, Z., Du, X., Xu, Y. et al., 2020.
    Structure of Mpro from SARS-CoV-2 and discovery of its inhibitors. 
    Nature 582, 289–293. 
    https://doi.org/10.1038/s41586-020-2223-y

    \bibitem[Libby and Novick (1982)]{ln:82}
    Libby, D. L., Novick, M. R., 1982.
    Multivariate Generalized beta distributions with applications to utility assessment.
    J. Educational Statist. 7, 271--294.

   \bibitem[Muirhead(2005)]{mh:05}
   Muirhead, R. J., 2005.
   Aspects of Multivariate Statistical Theory.
   John Wiley \& Sons, New York.

   \bibitem[Nadarajah(2007)]{n:07}
   Nadarajah, S. 2007.
   A bivariate gamma model for drought.
   Water Resour. Res. 43, W08501, doi:10.1029/2006WR005641.

   \bibitem[Nadarajah(2013)]{n:13}
   Nadarajah, S. 2013.
   A bivariate distribution with gamma and beta marginals with application to drought data.
   J. App. Statist. 36(3), 277-301.

   \bibitem[Olkin and Liu(2003)]{ol:03}
   Olkin, I., Liu, R., 2003.
   A bivariate beta distribution.
   Statist. Prob. Letters, 62, 407--412.

   \bibitem[Olkin and Rubin(1964)]{or:64}
   Olkin, I., Rubin, H., 1964.
   Multivariate beta distributions and independence properties  of Wishart distribution.
   Ann. Math. Statist. 35, 261--269. Correction 1966, 37(1), 297.

   \bibitem[PDB (2020)]{PDB:20}
     PDB - 6LU7. Liu, X., Zhang, B., Jin, Z., Yang, H., Rao, Z. 2020.
     The crystal structure of COVID-19 main protease in complex with an inhibitor N3. 
     PDB DOI: https://doi.org/10.2210/pdb6lu7/pdb 
     Primary publication DOI: https://doi.org/10.1038/s41586-020-2223-y

   \bibitem[Ramirez \emph{et al.}(2022)]{rbvc:22}
   Ramirez-Velasquez, I., Bedoya-Calle, A., Velez, E., Caro-Lopera, F., 2022.
   Shape Theory Applied to Molecular Docking and Automatic Localization of Ligand Binding Pockets in Large Proteins.
   ACS Omega, 7(50), 45991-46002
   doi: 10.1021/acsomega.2c02227

   \bibitem[Sarabia \emph{et al.}(2014)]{spj:14}
   Sarabia, J. M., Prieto, F., Jord\'a, V. 2014,
   Bivariate beta-generated distributions with application to well-being data.
   J. Statist. Distributions Appl. 1:15. https://doi.org/10.1186/2195-5832-1-15.
   
 \end{thebibliography}
\end{document}